\documentclass[]{amsart}
\usepackage[applemac]{inputenc}
\usepackage[english]{babel}
\usepackage{caption}
\usepackage{enumerate}
\usepackage{amsmath}
\usepackage{bm}
\usepackage{bbm}
\usepackage{xcolor}
\usepackage{graphicx}
\usepackage{amssymb}
\usepackage{tikz}

\usetikzlibrary{arrows,calc,shapes,decorations.pathreplacing,backgrounds}

\usepackage{color}

 \newtheorem{thm}{Theorem}[section]
 
 \newtheorem{lem}[thm]{Lemma}
 \newtheorem{prop}[thm]{Proposition}
 \theoremstyle{definition}
 \newtheorem{defn}[thm]{Definition}
 \theoremstyle{remark}
 \newtheorem{rem}[thm]{Remark}
 \newtheorem*{rem*}{Remark}
 
 \numberwithin{equation}{section}
 
\newcommand{\fxeta}{f_{x,\eta}}
\newcommand{\gxeta}{g_{x,\eta}}
\renewcommand{\epsilon}{\varepsilon}
\newcommand{\vb}{|}

\begin{document}
\title{Symmetric Excited States for a Mean-Field Model for a Nucleon}

\author[L. Le Treust]{Loïc Le Treust$^1$}
\address{$^1$Ceremade, Universit\'e Paris-Dauphine, Place de Lattre de Tassigny, 75775 Paris Cedex 16, France}
\email{letreust@ceremade.dauphine.fr}
\author[S. Rota Nodari]{Simona Rota Nodari$^2$}
\address{$^2$Laboratoire de Mathématiques, Université de Cergy-Pontoise, 2 avenue Adolphe Chauvin, 95302 Cergy-Pontoise Cedex, France}
\email{simona.rota-nodari@u-cergy.fr}

\begin{abstract}
 In this paper, we consider a stationary model for a nucleon interacting with the $\omega$ and $\sigma$ mesons in the atomic nucleus. The model is relativistic, and we study it in a nuclear physics nonrelativistic limit. By a shooting method, we prove the existence of infinitely many solutions with a given angular momentum. These solutions are ordered by the number of nodes of each component.  
\end{abstract}
\date{\today}
\maketitle
\tableofcontents
%
%
\section{Introduction}
This article is concerned with the existence of excited states for a stationary relativistic mean-field model for atomic nuclei in the nuclear physics nonrelativistic limit. To our knowledge, this model was first studied by Esteban and Rota Nodari; in two recent papers \cite{estebanrotanodari,estebanrotanodarirad}, the authors showed the existence of so-called ground states (see \cite{estebanrotanodari} for more details about the definition of ground states). 

As the authors formally derived in \cite{estebanrotanodarirad}, the equations of the model are given, in the case of a single nucleon, by 
\begin{equation}\label{eqdiracnrl}
\left\{
\begin{aligned}
&i\bm{\sigma}\cdot\nabla\chi+|\chi|^2\varphi-a|\varphi|^2\varphi+b\varphi=0,\\
&-i\bm{\sigma}\cdot\nabla\varphi+\left(1-|\varphi|^2\right)\chi=0,
\end{aligned}
\right.
\end{equation}
with $a$ and $b$ two positive parameters linked to the coupling constants and the nucleon's and mesons' masses. This system is the nuclear physics nonrelativistic limit of the $\sigma$-$\omega$ relativistic mean-field model (\cite{waleckasigmaomega, walecka}) in the case of a single nucleon. We remind that $\bm\sigma$ is the vector of Pauli matrices $(\sigma_1,\sigma_2,\sigma_3)$, and $\varphi,\chi:\mathbb R^3\to\mathbb C^2$. 

As in \cite{estebanrotanodarirad}, we look for solutions of (\ref{eqdiracnrl}) in the particular form
\begin{equation}\label{eqsolrad}
\left(\begin{aligned}\varphi(x)\\\chi(x)\end{aligned}\right)=\left(
\begin{aligned}
&g(r)\left(\begin{aligned}1\\0\end{aligned}\right)\\
&i f(r)\left(\begin{aligned}&\cos\vartheta\\&\sin\vartheta e^{i\phi}\end{aligned}\right)
\end{aligned}
\right)
\end{equation}
where $f$ and $g$ are real valued radial functions {and $(r,\vartheta,\phi)$ are the spherical coordinates of $x$}. The system (\ref{eqdiracnrl}) then turns to a nonautonomous planar differential system which is 
\begin{equation}\label{eq_nonautonome}
\left\{\begin{aligned}
f'+\frac{2}{r}f&=g(f^2-a g^2+b)\,,\\
g'&=f(1-g^2)\,.
\end{aligned}\right.
\end{equation}
In order to avoid solutions with singularities at the origin, we impose $f(0)=0$, and, since we are interested in finite energy solutions of (\ref{eqdiracnrl}), we seek solutions of \eqref{eq_nonautonome} {that are localized {\it{i.e.}} solutions} which fulfill
\begin{equation}\label{limiting}(f(r), g(r)) \longrightarrow (0,0)\quad\mbox{as}\quad  r\longrightarrow +\infty
\,.
\end{equation}

In \cite[Proposition 2.1]{estebanrotanodarirad}, Esteban and Rota Nodari showed that there is no nontrivial solution of \eqref{eq_nonautonome} such that \eqref{limiting} is satisfied unless $a-2b>0$. 
Hence, in what follows, we assume $a-2b>0$. 

For every given $x$, there exists a local solution $(f_x,g_x)$ of 

\begin{equation}\label{pbcauchy_nonautonome}
	\left\{\begin{aligned}
&f'+\frac{2}{r}f=g(f^2-a g^2+b)\,,\\
&g'=f(1-g^2)\,,\\
&f(0)=0,~g(0)=x.
\end{aligned}\right.
\end{equation}
The problem is to find $x$, such that the corresponding solution is global (i.e. defined for all $r\ge0$), and satisfies \eqref{limiting}. 

In {\cite[Proposition 2.1]{estebanrotanodarirad}}, Esteban and Rota Nodari proved that {if $(f_x,g_x)$ is a solution of \eqref{pbcauchy_nonautonome} satisfying \eqref{limiting} then $g_x^2(r)<1$, for all $r$ in $[0,+\infty)$.}  So, in particular, $x=g_x(0)$ must be chosen such that $x^2<1$. This creates additional difficulties to deal with.

Since the system of equations \eqref{eq_nonautonome} is symmetric with respect to $0$, we study the problem \eqref{pbcauchy_nonautonome} with $x\in[0,1).$ Moreover, let us remark that if $x=0$ then $(f_x,g_x)(r)=(0,0)$ for all $r\geq 0$ { is the unique solution of \eqref{pbcauchy_nonautonome}}.

In \cite{estebanrotanodarirad}, the authors proved the existence of a global localized solution $(f_x,g_x)$ of \eqref{pbcauchy_nonautonome} such that $f_x(r)<0<g_x(r)$ for all $r\in (0,+\infty)$. In this paper, we generalize this results by showing the existence of global localized solutions with any given number of nodes. Our main result is the following.

\begin{thm}\label{thm_main} Assume $a>2b>0$. There exists an increasing sequence $\{x_k\}_{k\ge 0}$ in $(0,1)$ with the following properties. For every $k\ge0$,
\begin{enumerate}
\item the solution $(f_{x_k},g_{x_k})$ of \eqref{pbcauchy_nonautonome} is a global solution;
\item { both $f_{x_k}$ and $g_{x_k}$ have} exactly $k$ zeros on $(0,+\infty)$;
\item 
$(f_{x_k},g_{x_k})$ converges exponentially to $(0,0)$ as $r\to+\infty$.
\end{enumerate}
\end{thm}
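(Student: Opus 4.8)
The plan is to use a shooting method in the initial value $x = g_x(0) \in (0,1)$, classifying initial data by the asymptotic behaviour of $(f_x,g_x)$ and counting nodes. First I would establish the basic dynamical picture of the planar system \eqref{eq_nonautonome}: since the system is autonomous-like away from the $\tfrac{2}{r}f$ damping term, I would look for a Lyapunov-type quantity. A natural candidate is an energy functional $E(r) = \tfrac12 f^2 g^2 - \tfrac{a}{4} g^4 + \tfrac{b}{2} g^2 - \tfrac12 f^2 + (\text{lower order})$ adapted so that $\frac{d}{dr}E(r)$ equals $-\tfrac{2}{r} f^2 \cdot (\text{something})$ along trajectories, giving monotonicity. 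The sign condition $a - 2b > 0$ should be exactly what makes the origin a saddle-type rest point with a one-dimensional stable manifold, so that localized solutions satisfying \eqref{limiting} are nongeneric and occur only for a discrete set of shooting parameters.

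Next I would set up the node-counting machinery. For each $x \in (0,1)$ the local solution $(f_x, g_x)$ exists on a maximal interval; I would track the number of sign changes of $g_x$ (equivalently, by the second equation $g' = f(1-g^2)$ and $g^2 < 1$, the critical points of $g_x$ correspond to zeros of $f_x$, so the nodes of $f$ and $g$ interlace). The key structural fact to prove is a continuity/monotonicity statement: as $x$ increases from $0$ toward $1$, the solution picks up additional oscillations, and the map $x \mapsto (\text{node count of } g_x)$ is, in a suitable sense, monotone and unbounded. I would introduce the sets $A_k = \{x : g_x \text{ has at least } k \text{ zeros and then escapes}\}$ and show each is open, that consecutive sets are separated, and that the separating values $x_k = \sup\{x : x \in A_k\}$ (or an analogous infimum) yield exactly the localized solutions with $k$ nodes. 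The construction of the increasing sequence $\{x_k\}$ then comes from taking these threshold parameters.

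I would prove global existence and the exponential decay in \eqref{limiting} for the selected $x_k$ by combining the energy monotonicity with the saddle structure at the origin. Once a trajectory is trapped in the basin converging to $(0,0)$, linearizing \eqref{eq_nonautonome} about the origin (for large $r$ the $\tfrac{2}{r}f$ term is negligible) gives eigenvalues $\pm\sqrt{b}$ under the assumption $a > 2b > 0$, so approach along the stable direction forces exponential decay at rate $\sqrt{b}$; a Gronwall or barrier argument makes this rigorous and simultaneously rules out finite-time blow-up for the threshold solutions. The statement that both $f_{x_k}$ and $g_{x_k}$ have exactly $k$ zeros follows from the interlacing of their nodes together with the node-count at the threshold value.

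The main obstacle I anticipate is the node-counting continuity argument, because the constraint $x^2 < 1$ (from the cited \cite[Proposition 2.1]{estebanrotanodarirad}) sits right at the boundary of the admissible shooting interval, and trajectories starting near $x = 1$ may blow up, escape the strip $|g| < 1$, or oscillate wildly before the damping $\tfrac{2}{r}f$ takes effect. Controlling the behaviour as $x \to 1^-$ — showing that the oscillation count genuinely diverges there rather than saturating, and that the ``escape'' alternatives ($g_x$ leaving $(-1,1)$ versus $f_x$ diverging) partition the parameter space cleanly — will require delicate a priori estimates on the nonautonomous system near the singular endpoint. The nonautonomy of the $\tfrac{2}{r}f$ term, which breaks the exact conservation law available in the autonomous case, is what makes this harder than a textbook shooting problem, and I expect the bulk of the technical work to lie in quantifying how this damping term affects the node structure uniformly in $r$.
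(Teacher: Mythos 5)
Your overall architecture is the right one and matches the paper's: a shooting method on $x\in(0,1)$, an energy $H(f,g)=\tfrac12 f^2(1-g^2)+\tfrac{a}{4}g^4-\tfrac{b}{2}g^2$ that is nonincreasing along trajectories (your candidate is $-H$ up to sign, but the idea is correct), open sets $A_k$ of initial data whose solutions acquire negative energy after $k$ sign changes, threshold values $x_k=\sup A_k$, and exponential decay from the saddle structure at the origin once a solution is trapped. The interlacing of the zeros of $f$ and $g$ that you invoke for item (2) is also how the paper concludes.

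However, there is a genuine gap at exactly the point you flag as ``the main obstacle'' and then leave unresolved: you must prove that for every $k$ there is $\epsilon>0$ such that every $x\in(1-\epsilon,1)$ produces at least $k+1$ sign changes of $g_x$ (point (ii) of Lemma \ref{lem_proprieteAkIk}); otherwise the induction $\sup I_k<\sup A_{k+1}$ cannot close and the sequence $\{x_k\}$ need not exist. ``Delicate a priori estimates'' will not work in the form you suggest, and your proposed mechanism --- that the node count is monotone and unbounded in $x$ --- is neither proved nor true in any usable sense here. The specific difficulty is that $(f_1,g_1)$ converges to the rest point $(-\sqrt{a-b},1)$ of the autonomous system \eqref{eq_autonome} without ever winding around the origin, so for $x$ close to $1$ the trajectory lingers arbitrarily long near that saddle and no rotation-speed estimate (as in the sublinear elliptic case) is available. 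The paper's way out is a Hamiltonian regularization: replace $\tfrac{2}{r}f$ by $\tfrac{2\varphi_\eta(f,g)}{r}f$ with $\varphi_\eta=0$ near the four saddles $(\pm\sqrt{a-b},\pm1)$, so that inside those neighborhoods the flow is exactly the Hamiltonian one, energy is conserved there, and the exit point from each neighborhood can be located on a level set of $H$; one then iterates around the four corners of $\partial\mathcal{A}$ to force $k$ extra sign changes, and finally passes to the limit $\eta\to0$ (checking that the limiting $x_0<1$ and that the node count is preserved). Without this regularization, or some substitute argument controlling where the trajectory exits the neighborhood of $(-\sqrt{a-b},1)$, your proof does not go through. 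A secondary, fixable inaccuracy: the decay rate is obtained in the paper by a comparison/barrier argument giving $K_{a,b}=\min\{b/2,(2a-b)/(2a)\}$ rather than by linearization at rate $\sqrt{b}$, and the trapping step needs the quantitative criterion $H(f_x,g_x)(R)<c_0/R$ (Lemma \ref{lem_trapping1}) to show that neighboring initial data land in $A_k\cup I_k\cup A_{k+1}$, which your sketch does not supply.
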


This theorem is the first result of existence of excited state solutions for the model studied in \cite{estebanrotanodarirad,estebanrotanodari} for which Esteban and Rota Nodari proved the existence of a ground state solution.

Our theorem is similar to the result obtained by Balabane, Cazenave,  Douady and Merle (\cite{balabane1988existence}) for a nonlinear Dirac equation. Our proof is based on a shooting method inspired by the one used by Balabane, Dolbeault and Ounaies (\cite{balabane2003}). 

In \cite{balabane1988existence}, the authors proved the existence of infinitely many stationary states for a nonlinear Dirac equation. More precisely, they showed the existence of a bounded  increasing sequence of positive initial data $\{x_k\}_k$ such that the associated solutions are global and each component has $k$ nodes. 

In \cite{balabane2003}, thanks to some estimations on the energy decay and the rotation speed, the authors proved the existence of infinitely many solutions for a sublinear elliptic equation.  As in \cite{balabane1988existence}, they showed the existence of an 
 increasing sequence of initial data $\{x_k\}_k$ such that the associated solutions are radial, compactly supported and have exactly $k$ nodes.



As we remarked above, the first difficulty to deal with here is that, to obtain a localized solution, the initial condition $x$ must be chosen in $(0,1)$. Moreover, we are looking for solutions such that each component has exactly $k$ zeros on $(0,+\infty)$.  

Usually in  a shooting method, the localized solution with $k$ nodes is obtained taking the solution whose initial data $x$ is  the supremum of a well-chosen open subset of $\{x:~g_x~\text{has $k$ zeros}\}$. Hence, the main difficulty of our shooting method is to prove that for any $k\in \mathbb{N}$, there exists $\epsilon>0$ such that
\[
	\{x\in(0,1):~g_x~\text{has $k$ zeros}\}\subset(0,1-\epsilon).
\]
To do this, we have to give some accurate estimations on the behavior of the solution when the initial condition $x$ becomes close to $1$. 
The presence of four rest points $(\pm\sqrt{a-b},\pm 1)$ in the Hamiltonian system
\begin{equation}\label{eq_autonome}
	\left\{\begin{aligned}
&f'=g(f^2-a g^2+b)\,\\
&g'=f(1-g^2)\,
\end{aligned}\right.,
\end{equation}
associated with the system \eqref{eq_nonautonome}, makes this study difficult. 
Indeed, we would like to control the solutions $(f_x,g_x)$ thanks to the continuity of the flow comparing $(f_x,g_x)$ to $(f_1,g_1)$ whenever $x$ is close enough to $1$. The problem is that $(f_1,g_1)$ tends to the rest point $(-\sqrt{a-b},1)$ of the system \eqref{eq_autonome}. Thus, $(f_x,g_x)$ stay in a neighborhood of $(-\sqrt{a-b},1)$ a very long time if $x$ is sufficiently close to $1$. Since $(f_1,g_1)$ does not wind around $(0,0)$, it is hopeless to get estimations on the speed of rotations of $(f_x,g_x)$ around $(0,0)$ as in \cite{balabane2003}. Hence, we introduce another strategy to prove that $(f_x,g_x)$ winds around $(0,0)$.

First of all, we prove that $(f_x,g_x)$ exits the neighborhoods of $(-\sqrt{a-b},1)$ at finite time, possibly very large. 
Next,  we want to control the position of $(f_x,g_x)$ when this occurs. To do this, we introduce the so-called Hamiltonian regularization. More precisely, we replace the system  \eqref{eq_nonautonome} by the Hamiltonian ones \eqref{eq_autonome} in a neighborhood of the points $(\pm\sqrt{a-b},\pm 1)$ (see Figure \ref{regsystem}). Then, we can use the  qualitative properties of the solutions of the Hamiltonian system \eqref{eq_autonome} to know the position of the solution when it exits the neighborhood of $(-\sqrt{a-b},1)$. Finally, we iterate the reasoning to prove that if $x$ is sufficiently close to $1$, then $g_x$ has more than $k$ zeros.

The idea of the Hamiltonian regularization is inspired by the proof of Le Treust in \cite{letreust2011}. In this paper, the author proved the existence of infinitely many compactly supported nodal solutions of a Dirac equation with singular nonlinearity. The main problem encountered is that the nonlinearity is singular and the main theorems of ODE fail to show local existence and uniqueness. To overcome this, Le Treust used a regularization by a Hamiltonian system whenever the problems occur. The advantage of such a regularization is that it gives a better control of the regularized solutions while keeping true some qualitative properties of the solutions of the non-autonomous system of equation.

In section \ref{secregularized}, we introduce the regularized system and we prove the existence of nodal localized solutions of the regularized problem assuming some key lemmas. In the next section, we prove these lemmas. In section \ref{secproofth}, we show that the localized nodal solutions of the original system \eqref{eq_nonautonome} can be obtained as limits of nodal localized solutions of the regularized system. Finally, in the appendix, we give some useful properties of the Hamiltonian energy associated to the system.

\section{The regularized problem and the shooting method}\label{secregularized}
\subsection{Construction of the regularized problem} \label{subsec_construtionreg}
\tikzstyle{background grid}=[draw, black!50,step=.5cm]
	\begin{figure}[h!]
	\begin{center}
	\begin{tikzpicture}
         \node[inner sep=0pt,above right]{\includegraphics[]{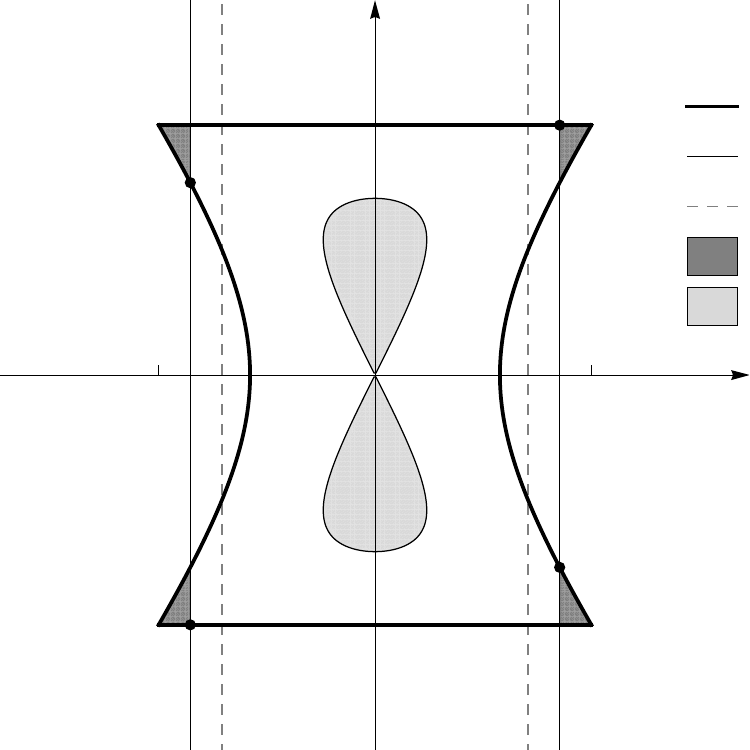}};
         \node (Hs) at (9.2,5) {Hamiltonian System};
         \node (H0) at (8.1,4.5) {$H\le 0$};
         \node (f) at (7.8,3.8) {$f$};
         \node (g) at (3.8,7.8) {$g$};
         \node (X1) at (1.6,5.7) {$X_1$};
         \node (X2) at (1.7,1) {$X_2$};
         \node (X3) at (6,2) {$X_3$};
         \node (X4) at (6,6.6) {$X_4$};
         \node (feta2) at (9.15,6) {$f=\pm\left(\sqrt{a-b}-\frac{\eta}{2}\right)$};
         \node (feta) at (9.15,5.5) {$f=\pm\left(\sqrt{a-b}-\eta\right)$};
         \node (A) at (7.85,6.5) {$\partial\mathcal A$};
         \node (ab) at (6.12,3.6) {\tiny $\sqrt{a-b}$};
         \node (-ab) at (1.38,3.6) {\tiny $-\sqrt{a-b}$};
         \node (1) at (3.7,6.5) {\tiny $1$};
         \node (-1) at (3.55,1.1) {\tiny $-1$};
	\end{tikzpicture}
	\caption{Regularized System}
	\label{regsystem}
	\end{center}
\end{figure}

Let
\[
	\varphi:(\eta,f,g)\in{\left(0,\sqrt{a-b}-\sqrt{\frac{a}{2}}\right)}\times[-\sqrt{a-b},\sqrt{a-b}]\times\mathbb{R}\mapsto\varphi_\eta(f,g)\in[0,1]
\]
be a smooth function on 
\[
	{\left(0,\sqrt{a-b}-\sqrt{\frac{a}{2}}\right)}\times(-\sqrt{a-b},\sqrt{a-b})\times\mathbb{R}
\]
 such that for all $(f,g)\in\mathbb{R}^2$, 
 all $\eta\in {\left(0,\sqrt{a-b}-\sqrt{\frac{a}{2}}\right)}$ 
\[
	\varphi_\eta(f,g) = \varphi_\eta(|f|,|g|)=\left\{
		\begin{array}{ll}
			0	&\text{if}~|f|\geq\sqrt{a-b}-\eta/2\\
			1	&\text{if}~|f|\leq\sqrt{a-b}-\eta.
		\end{array}
	\right.
\]

Consider the system of equations
\begin{equation}\label{eq_nonautonomereg}
	\left\{
		\begin{aligned}
			f'+\frac{2\varphi_\eta(f,g)}{r}f	&=g(f^2-ag^2+b),\\
			g'			&=f(1-g^2),
		\end{aligned}
	\right.
\end{equation}
and the Cauchy problem
\begin{equation}\label{pbeq_nonautonomereg}
	\left\{
		\begin{aligned}
			&f'+\frac{2\varphi_\eta(f,g)}{r}f	=g(f^2-ag^2+b),\\
			&g'			=f(1-g^2),\\
			&f(0)=0,~g(0)=x.
		\end{aligned}
	\right.
\end{equation}
We denote by $(f_{x,\eta},g_{x,\eta})$ the solutions to problem \eqref{pbeq_nonautonomereg}.
\begin{rem} When $\eta>0$,
in the neighborhood of the four points $(\pm\sqrt{a-b},\pm 1),$ the system of equations \eqref{eq_nonautonomereg} becomes the following autonomous one
\begin{equation}\label{eq_autonomebis}\tag{\ref{eq_autonome}}
	\left\{
		\begin{array}{rl}
			f'			&=g(f^2-ag^2+b),\\
			g'			&=f(1-g^2).
		\end{array}
	\right.
\end{equation}
This system is a Hamiltonian system associated with the energy
\begin{equation}\label{eqenergy}
	H(f,g)=\frac{1}{2}f^2(1-g^2)+\frac{a }{4}g^4-\frac{b}{2}g^2.
\end{equation}
\end{rem}
\begin{rem}
The behavior of the solutions  of \eqref{eq_autonome} is easier to understand than the one  of the solutions of  \eqref{eq_nonautonome}. This is actually the reason why we introduce such a Hamiltonian regularization in the neighborhood of the saddle points $(\pm\sqrt{a-b},\pm 1)$ of $H$. 
\end{rem}
\subsection{Properties of the regularized system}
We fix $\eta\in(0,\sqrt{a-b}-\sqrt{\frac{a}{2}})$.

We begin by studying the existence and the uniqueness of the solutions of (\ref{eq_nonautonomereg}).
\begin{lem}\label{lemexistencesoleta}
Let $x\in \mathbb{R}$. For any $a ,b>0$, there is $\tau_\eta>0$ and $(\fxeta,\gxeta)\in\mathcal{C}^1\left([0,\tau_\eta],\mathbb{R}^2\right)$ unique solution of (\ref{eq_nonautonomereg}) satisfying $\fxeta(0)=0$, $\gxeta(0)=x$. Moreover, $(\fxeta,\gxeta)$ can be extended on a maximal interval $[0,R_{x,\eta})$ with either $R_{x,\eta}=+\infty$ or $R_{x,\eta}<+\infty$ and $\lim_{r\to {R_{x,\eta}}}|\fxeta|+|\gxeta|=+\infty$. Furthermore, $(\fxeta,\gxeta)$ depends continuously on $x$ and $\eta$, uniformly on $[0,R]$ for any $R<R_{x,\eta}$.
\end{lem}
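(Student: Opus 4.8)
The plan is to establish local existence and uniqueness by writing \eqref{pbeq_nonautonomereg} as an integral equation and applying a fixed-point argument, then to extend to a maximal interval by a standard continuation argument, and finally to obtain continuous dependence from the Lipschitz estimates in the fixed-point scheme. The only genuine obstacle is the singular factor $\frac{2\varphi_\eta(f,g)}{r}$ at $r=0$: away from the origin the right-hand side is smooth and the classical Cauchy--Lipschitz theorem applies directly, so all the work is in handling a neighborhood of $r=0$.

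First I would recast the first equation in integrating-factor form. Because the singular coefficient depends on the unknown through $\varphi_\eta(f,g)$, I cannot use a fixed explicit integrating factor; instead I observe that near $r=0$ the initial condition $g(0)=x$ is finite and $f(0)=0$, so by continuity of any prospective solution we have $\varphi_\eta(f(r),g(r))=1$ for small $r$ (since $f(0)=0$ means $|f|<\sqrt{a-b}-\eta$ near the origin, where $\varphi_\eta\equiv 1$). Thus on a small initial interval the equation reduces to the genuinely singular but $\varphi$-independent system
\begin{equation*}
f'+\frac{2}{r}f=g(f^2-ag^2+b),\qquad g'=f(1-g^2),
\end{equation*}
which is exactly \eqref{pbcauchy_nonautonome}. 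For this system I would use the integrating factor $r^2$, rewriting the $f$-equation as $(r^2 f)'=r^2 g(f^2-ag^2+b)$, hence $f(r)=\frac{1}{r^2}\int_0^r s^2 g(s)(f(s)^2-ag(s)^2+b)\,ds$ and $g(r)=x+\int_0^r f(s)(1-g(s)^2)\,ds$. This integral formulation is regular at $r=0$: the weight $s^2/r^2$ is bounded, and the map $(f,g)\mapsto$ (right-hand side) is a contraction on a small ball in $\mathcal C^0([0,\tau],\mathbb R^2)$ for $\tau$ small, yielding a unique continuous solution, which is then seen to be $\mathcal C^1$ by differentiating the integral identities.

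Once I have a solution on a short interval $[0,\tau_\eta]$ reaching a point $(f(\tau_\eta),g(\tau_\eta))$ with $\tau_\eta>0$, I would invoke the standard (nonsingular) Cauchy--Lipschitz theorem to extend it beyond $\tau_\eta$, since for $r>0$ the full right-hand side of \eqref{eq_nonautonomereg}, including $\frac{2\varphi_\eta(f,g)}{r}f$, is locally Lipschitz in $(f,g)$ and continuous in $r$ (here $\varphi_\eta$ is smooth on the relevant region and $1/r$ is bounded away from $0$). Gluing the local solution near the origin to this continuation and using uniqueness on overlaps gives a unique maximal solution on $[0,R_{x,\eta})$, and the standard blow-up alternative gives the dichotomy $R_{x,\eta}=+\infty$ or $\lim_{r\to R_{x,\eta}}(|f_{x,\eta}|+|g_{x,\eta}|)=+\infty$.

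For continuous dependence on $x$ and $\eta$, uniform on compact subintervals, I would track the dependence through both pieces of the argument: the contraction constant and the integral bounds in the fixed-point step depend continuously on $x$ (entering through $g(0)=x$) and on $\eta$ (entering through the smooth function $\varphi_\eta$), and on the continuation interval a Gronwall estimate on the difference of two solutions, together with the local Lipschitz dependence of the vector field on the parameters, yields the claimed uniform continuous dependence on $[0,R]$ for any $R<R_{x,\eta}$. The main point to be careful about is that all constants near $r=0$ stay controlled; this is where the reduction to $\varphi_\eta\equiv 1$ on the initial interval keeps the argument clean, since it removes the $\eta$-dependence precisely in the singular region.
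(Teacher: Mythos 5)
Your argument is correct and is essentially the paper's proof: following Cazenave--V\'azquez, one removes the singularity at $r=0$ with the integrating factor $r^2$, solves the resulting integral equations $f(r)=\frac{1}{r^2}\int_0^r(\cdots)\,ds$, $g(r)=x+\int_0^r(\cdots)\,ds$ by a contraction mapping, and then invokes standard Cauchy--Lipschitz theory, the blow-up alternative, and Gronwall for the continuation and the continuous dependence on $(x,\eta)$. The only (immaterial) difference is that the paper keeps the regularization in the integral equation through the extra, nonsingular integrand $2s\bigl(1-\varphi_\eta(f(s),g(s))\bigr)f(s)$ rather than first reducing to $\varphi_\eta\equiv1$ near the origin as you do.
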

\begin{proof}
As in \cite{cazenavevazquez}, it is enough to write
\begin{align*}
f(r)&=\frac{1}{r^2}\int_{0}^rs^2g(s)(f^2(s)-a g^2(s)+b)+2s(1-\varphi_\eta(f(s),g(s)))f(s)\,ds\,,\\
g(r)&=x+\int_{0}^rf(s)(1-g^2(s))\,ds\,,
\end{align*}
and note that the right hand side of (\ref{eq_nonautonomereg}) is a Lipschitz continuous function of $(f,g)$. The lemma follows from a classical contraction mapping argument.
\end{proof}

Next, we define  $\mathcal{A}=\{(f_0,g_0)\in\mathbb{R}^2|\, 2f_0^2-ag_0^2-(a-2b)\le 0,\; g_0^2\le 1\}$, the  set of admissible points. 
\begin{rem*} If $g_0^2\le 1$, $(f_0,g_0)\in \mathcal{A}\,$ if and only if $\,H(f_0,g_0)\le H(0,1)=\frac{a-2b}{4}$ (see Figure \ref{regsystem}). \end{rem*}

A key property of the solutions of the system (\ref{eq_nonautonomereg}) is the behavior of the energy $H$ along the trajectories of the solutions  as stated in the following lemma. 
\begin{lem}\label{lem_deriveenergiereg}
	Let $x\in \mathbb{R}$ and $a ,b>0$. Then for any $r\in [0,R_{x,\eta})$ we have
	\[
		\frac{d}{dr}H(\fxeta,\gxeta)(r) = -\frac{2\varphi_\eta(\fxeta,\gxeta)}{r}\fxeta^2(1-\gxeta^2).
	\]
\end{lem}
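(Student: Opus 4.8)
The plan is to compute the total derivative of the energy $H$ along a solution $(\fxeta,\gxeta)$ of the regularized system \eqref{eq_nonautonomereg} by the chain rule, and to observe that almost everything cancels because $H$ is the Hamiltonian of the autonomous system \eqref{eq_autonome}. First I would write
\[
	\frac{d}{dr}H(\fxeta,\gxeta)=\partial_f H(\fxeta,\gxeta)\,\fxeta'+\partial_g H(\fxeta,\gxeta)\,\gxeta',
\]
and record the two partial derivatives from \eqref{eqenergy}, namely $\partial_f H=f(1-g^2)$ and $\partial_g H=-f^2 g+a g^3-b g=-g(f^2-ag^2+b)$. The key algebraic point is that these are exactly the right-hand sides of the \emph{autonomous} system \eqref{eq_autonome}, with $\partial_f H$ equal to $g'$ and $\partial_g H$ equal to $-f'$ for that system, which is precisely the statement that \eqref{eq_autonome} is Hamiltonian for $H$.

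Next I would substitute the \emph{actual} dynamics of the regularized system \eqref{eq_nonautonomereg}. The only difference between \eqref{eq_nonautonomereg} and the autonomous system \eqref{eq_autonome} is the extra term $-\tfrac{2\varphi_\eta(\fxeta,\gxeta)}{r}\fxeta$ in the equation for $f'$; the equation for $g'$ is unchanged. Thus I substitute
\[
	\fxeta'=\gxeta(\fxeta^2-a\gxeta^2+b)-\frac{2\varphi_\eta(\fxeta,\gxeta)}{r}\fxeta,\qquad \gxeta'=\fxeta(1-\gxeta^2)
\]
into the chain-rule expression. Grouping terms, the contributions that reproduce the autonomous vector field combine as $\partial_f H\cdot g(f^2-ag^2+b)+\partial_g H\cdot f(1-g^2)$, which vanishes identically because it equals $\{H,H\}$, the Poisson bracket of $H$ with itself (equivalently, $H$ is conserved along \eqref{eq_autonome}). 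The only surviving term is $\partial_f H$ multiplied by the extra drift, i.e.
\[
	\partial_f H(\fxeta,\gxeta)\cdot\left(-\frac{2\varphi_\eta(\fxeta,\gxeta)}{r}\fxeta\right)=\fxeta(1-\gxeta^2)\cdot\left(-\frac{2\varphi_\eta(\fxeta,\gxeta)}{r}\fxeta\right),
\]
which simplifies to $-\tfrac{2\varphi_\eta(\fxeta,\gxeta)}{r}\fxeta^2(1-\gxeta^2)$, exactly the claimed formula.

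This argument is essentially a one-line chain-rule computation once the Hamiltonian structure is recognized, so there is no serious conceptual obstacle; the whole proof rests on the cancellation coming from $H$ being a first integral of \eqref{eq_autonome}. The only points requiring mild care are regularity and the behavior at $r=0$: the differentiation is valid for $r\in(0,R_{x,\eta})$ since $(\fxeta,\gxeta)\in\mathcal{C}^1$ by Lemma \ref{lemexistencesoleta}, and the factor $1/r$ is harmless on this range because $r>0$ there. If one wants the identity to hold at $r=0$ as well, I would note that near the origin $\varphi_\eta(\fxeta,\gxeta)=1$ (since $\fxeta(0)=0$ is small) and $\fxeta(r)\to 0$, so $\fxeta^2/r\to 0$ and the right-hand side extends continuously by $0$ at $r=0$; this matches the statement's range $[0,R_{x,\eta})$ without difficulty.
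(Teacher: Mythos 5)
Your computation is correct and is exactly the ``straightforward calculation'' the paper invokes: the chain rule plus the Hamiltonian cancellation leaves only the term coming from the extra drift $-\tfrac{2\varphi_\eta}{r}\fxeta$. Your added remark on the behavior at $r=0$ (using $\fxeta(0)=0$ so that $\fxeta^2/r\to 0$) is a harmless and correct refinement that the paper does not spell out.
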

\begin{rem}
Let us remark that this property which is true for the non-regularized system (\ref{eq_nonautonome}) is also true for the regularized one thanks to our choice of regularization.
\end{rem}
\begin{proof}
The proof is a straightforward calculation.
\end{proof}


Next, we prove a result that ensures that for all $x\in(0,1)$ the solutions $(\fxeta,\gxeta)$ are global and live in $\mathcal A$.

\begin{lem}\label{lempropertiessol} Let  $a ,b>0$ such that $a -2b>0$ and let $(\fxeta,\gxeta)$ be the solution of (\ref{eq_nonautonomereg}) satisfying $\fxeta(0)=0$, $\gxeta(0)=x$. If $x^2\leq1$, then $\gxeta^2(r)\leq1$ and $\fxeta^2(r)\leq a-b$ for all $r \in [0,R_{x,\eta})$ and $R_{x,\eta}=+\infty$. Moreover, $(\fxeta,\gxeta)(r)\in\mathcal{A}$, for all $r\in[0,+\infty)$  and if $x^2<1$, then $(\fxeta,\gxeta)(r)\in\mathring{\mathcal A}$ for all $r\in[0,+\infty)$.
\end{lem}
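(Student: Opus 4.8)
The plan is to treat the four assertions in sequence, using the a priori confinement $g^2\le 1$ to feed the energy estimate of Lemma~\ref{lem_deriveenergiereg} and then reading off everything from the geometry of $\mathcal A$ recorded in the Remark. First I would establish the invariance of the strip $\{g^2\le 1\}$, which is the cornerstone and uses the energy nowhere. Setting $h(r):=1-\gxeta^2(r)$ and differentiating the $g$-equation gives the linear scalar ODE $h'=-2\fxeta\gxeta\,h$ on $[0,R_{x,\eta})$, whence
\[
h(r)=(1-x^2)\exp\left(-\int_0^r 2\fxeta(s)\gxeta(s)\,ds\right).
\]
Since $\fxeta,\gxeta$ are $C^1$, the integral is finite on every compact subinterval, so $h$ keeps the sign of $h(0)=1-x^2$. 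Thus $x^2\le 1$ forces $\gxeta^2\le 1$ throughout, and $x^2<1$ forces the strict inequality $\gxeta^2<1$.

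Next I would run the energy argument. A direct computation gives $H(0,x)-H(0,1)=-\tfrac{1-x^2}{4}\bigl(a(1+x^2)-2b\bigr)\le 0$ for $x^2\le 1$, strict if $x^2<1$, using $a-2b>0$; hence the starting point lies in $\mathcal A$. By Lemma~\ref{lem_deriveenergiereg} together with the sign facts $\varphi_\eta\ge 0$, $\fxeta^2\ge 0$ and $1-\gxeta^2\ge 0$ (from the first step), the map $r\mapsto H(\fxeta,\gxeta)(r)$ is nonincreasing, so $H(\fxeta,\gxeta)(r)\le H(0,x)\le H(0,1)=\tfrac{a-2b}{4}$. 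Combined with $\gxeta^2\le 1$, the characterization in the Remark yields $(\fxeta,\gxeta)(r)\in\mathcal A$, that is $2\fxeta^2-a\gxeta^2-(a-2b)\le 0$; plugging $\gxeta^2\le 1$ into this inequality gives $2\fxeta^2\le a+(a-2b)=2(a-b)$, i.e. $\fxeta^2\le a-b$.

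These bounds give global existence for free: since $|\fxeta|+|\gxeta|\le \sqrt{a-b}+1$ stays bounded on $[0,R_{x,\eta})$, the blow-up alternative of Lemma~\ref{lemexistencesoleta} rules out $R_{x,\eta}<+\infty$, so $R_{x,\eta}=+\infty$. Finally, for the interior statement when $x^2<1$, I would use the identity $H(f,g)-H(0,1)=\tfrac{1-g^2}{4}\bigl(2f^2-ag^2-(a-2b)\bigr)$: the first step gives $\gxeta^2<1$, while the monotonicity of $H$ and the strict initial inequality give $H(\fxeta,\gxeta)(r)\le H(0,x)<H(0,1)$, and since $1-\gxeta^2>0$ the identity forces $2\fxeta^2-a\gxeta^2-(a-2b)<0$; both defining inequalities of $\mathcal A$ being strict, $(\fxeta,\gxeta)(r)\in\mathring{\mathcal A}$.

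I expect the only genuine subtlety to be the interplay in the second step: the energy estimate by itself controls only the sublevel set $\{H\le H(0,1)\}$, whose geometry coincides with $\mathcal A$ (and thereby yields the bound on $f^2$) precisely once the a priori confinement $g^2\le 1$ is in hand. Everything else reduces to the monotonicity of $H$ and elementary algebra.
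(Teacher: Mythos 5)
Your argument is correct for the main case $x^2<1$, and its first step takes a genuinely different route from the paper. Where you obtain the confinement $\gxeta^2\le 1$ by observing that $h=1-\gxeta^2$ solves the linear scalar equation $h'=-2\fxeta\gxeta\,h$ and therefore cannot change sign, the paper argues by contradiction through the energy: it supposes a first time $r_0$ with $\gxeta^2(r_0)=1$, uses the monotonicity of $H$ on $[0,r_0)$ to deduce $F(x)\ge F(1)$ for $F(x)=\frac{a}{4}x^4-\frac{b}{2}x^2$, and contradicts the elementary fact that $F(x)<F(1)$ whenever $x^2<1$. Your variant is more elementary for this step (it uses no energy at all), yields the strict inequality $\gxeta^2<1$ directly rather than as a by-product, and covers $x^2=1$ (where $h\equiv0$) in the same stroke. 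From there on the two proofs coincide: monotonicity of $H$ from Lemma \ref{lem_deriveenergiereg}, the characterization of $\mathcal A$ via the factorization $H(f,g)-H(0,1)=\frac{1-g^2}{4}\bigl(2f^2-ag^2-(a-2b)\bigr)$, the resulting bound $\fxeta^2\le a-b$, and the blow-up alternative of Lemma \ref{lemexistencesoleta}.

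One point to tighten: in the boundary case $x^2=1$ your Step 1 gives $\gxeta^2\equiv1$, and there the ``if'' direction of the Remark you invoke degenerates. On $\{g^2=1\}$ one has $H(f,\pm1)=H(0,1)$ for \emph{every} $f$, so $H\le H(0,1)$ together with $g^2\le1$ no longer implies $2f^2-ag^2-(a-2b)\le0$, and the bound $\fxeta^2\le a-b$ cannot be read off the energy. A separate (easy) observation is needed: for $g\equiv\pm1$ the first equation reduces to $f'+\frac{2\varphi_\eta(f,g)}{r}f=f^2-(a-b)$, and $\pm\sqrt{a-b}$ are equilibria of the autonomous regime (where $\varphi_\eta=0$) that $\fxeta$ cannot cross by uniqueness, whence $\fxeta^2\le a-b$ and global existence. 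The paper disposes of this with the sentence ``the case $x=\pm1$ is straightforward,'' so the omission is shared; for $x^2<1$, which is the only case used in the sequel, your proof is complete.
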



Lemma \ref{lempropertiessol} can be proved as in \cite{estebanrotanodarirad} using the monotonicity properties of the energy. For the reader's convenience, we rewrite the proof here. 

\begin{proof} 
First of all, we use the monotonicity of the function $F(x)=\frac{a }{4}x^4-\frac{b}{2}x^2$, the facts that $F(x)\le0$ in $ \left[-\sqrt{\frac{2b}{a }}, \sqrt{\frac{2b}{a }}\right]$ and $ \sqrt{\frac{2b}{a}}<1$ to show that $F(x)<F(1)$ for all $x$ such that $x^2<1$.

Let $\gxeta(0)=x$ such that $x^2<1$ and suppose, by contradiction, that there exists $r_0$ such that $\gxeta^2(r_0)=1$ and $\gxeta^2(r)<1$ for all $r\in [0,r_0)$.
As a consequence of Lemma \ref{lem_deriveenergiereg}, the energy $H(\fxeta,\gxeta)(r)$ is non-increasing on $[0,r_0)$, that means
$$
H(0,x)\ge H(\fxeta,\gxeta)(r_0) \; \mbox{, or equivalently, } \;
F(x) \ge F(1).
$$
The above inequality contradicts the properties of $F$. As a conclusion, $\gxeta^2(r)<1$ for all $r\in [0,R_{x,\eta})$.

Then, applying Lemma \ref{lem_deriveenergiereg}, we obtain that the energy is non-increasing. Thus, 
$$
H(\fxeta,\gxeta)(r)\le H(0,x)<\frac{a-2b}{4}\,,\;\forall r\in[0,R_{x,\eta})\,,$$
and by the remark following the definition of the set $\mathcal{A}$, $(\fxeta,\gxeta)(r)\in\mathring{\mathcal{A}}$ and $\fxeta^2(r)<a-b$, for all $r\in[0,R_{x,\eta})$. In particular, $R_{x,\eta}=+\infty$. The case $x=\pm1$ is straightforward.
\end{proof}

\begin{rem}\label{rem_decroissanceenergie} As a consequence of Lemma \ref{lem_deriveenergiereg} and Lemma \ref{lempropertiessol}, if $x^2\leq 1$, the energy $H(\fxeta,\gxeta)(r)$ is non-increasing on $[0,+\infty)$.
\end{rem}


{ Then, we state the following perturbation result.}
\begin{lem}\label{lemconvconssol} 
Let $(\tilde f,\tilde g)\in \mathcal{A}$. Let $(f,g)$ be the solution of (\ref{eq_autonome}) with initial data $(\tilde f,\tilde g)$. Let $(\tilde f^n,\tilde g^n)\in\mathcal{A}$ and $\rho_n$ be such that 
\begin{align*}
\lim\limits_{n\to+\infty}\rho_n=+\infty&\mbox{ and }\lim\limits_{n\to+\infty}(\tilde f^n,\tilde g^n)=(\tilde f, \tilde g).
\end{align*}
Let $(f^n_\eta,g^n_\eta)$ be a solution of 
\begin{equation*}
\left\{\begin{aligned}
(f^n)'+\frac{2\varphi_\eta(f^n,g^n)}{\rho_n+r}f^n&=g^n((f^n)^2-a (g^n)^2+b)\\
(g^n)'&=f^n(1-(g^n)^2)
\end{aligned}\right.
\end{equation*}
such that $f^n(0)=\tilde f^n$, $g^n(0)=\tilde g^n$. 
Then
\begin{enumerate}
\item $(f^n_\eta,g^n_\eta)$ is defined on $[0,+\infty),$
\item $(f^n_\eta,g^n_\eta)$ converges to $(f,g)$ uniformly on bounded intervals.
\end{enumerate} 
\end{lem}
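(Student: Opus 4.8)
The statement is a continuous-dependence result comparing solutions of a family of non-autonomous perturbations of the Hamiltonian system \eqref{eq_autonome} with the unperturbed autonomous solution, where the perturbing term $\frac{2\varphi_\eta(f^n,g^n)}{\rho_n+r}f^n$ vanishes in the limit because $\rho_n\to+\infty$. The plan is to treat this as a standard Gronwall-type stability argument, with the key point being that the perturbation is \emph{uniformly small} on bounded intervals once $\rho_n$ is large, and that all trajectories stay in a fixed compact set so that the vector field is Lipschitz with a uniform constant. First I would establish global existence, item (1): since $(\tilde f^n,\tilde g^n)\in\mathcal{A}$ and the energy $H$ is non-increasing along these perturbed trajectories (by the analogue of Lemma \ref{lem_deriveenergiereg}, whose computation gives the same sign with $r$ replaced by $\rho_n+r>0$, so that $\frac{d}{dr}H=-\frac{2\varphi_\eta}{\rho_n+r}(f^n)^2(1-(g^n)^2)\le 0$ on $\mathcal{A}$), exactly the reasoning of Lemma \ref{lempropertiessol} applies to show $(f^n_\eta,g^n_\eta)(r)\in\mathcal{A}$ for all $r$, hence the solution is bounded and therefore global. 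This simultaneously confines both $(f^n_\eta,g^n_\eta)$ and the limit trajectory $(f,g)$ to the compact set $\mathcal{A}$.

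Next I would prove the uniform convergence, item (2). Fix a bounded interval $[0,T]$. On $\mathcal{A}$ the right-hand side $(f,g)\mapsto(g(f^2-ag^2+b),f(1-g^2))$ is smooth, hence Lipschitz with some constant $L$ depending only on $\mathcal{A}$. Writing both systems in integral form and subtracting, I would estimate
\begin{equation*}
\vb(f^n_\eta,g^n_\eta)(r)-(f,g)(r)\vb \le \vb(\tilde f^n,\tilde g^n)-(\tilde f,\tilde g)\vb + L\int_0^r\vb(f^n_\eta,g^n_\eta)-(f,g)\vb\,ds + \int_0^r\frac{2\varphi_\eta(f^n,g^n)}{\rho_n+s}\vb f^n\vb\,ds.
\end{equation*}
The last term, the genuine perturbation, is bounded by $\frac{2\sqrt{a-b}}{\rho_n}\,T$ using $\varphi_\eta\le 1$ and the bound $\vb f^n\vb\le\sqrt{a-b}$ from confinement in $\mathcal{A}$; together with the hypothesis $(\tilde f^n,\tilde g^n)\to(\tilde f,\tilde g)$, this makes the sum of the non-integral terms tend to $0$ uniformly as $n\to+\infty$. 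Applying Gronwall's inequality then yields a bound of the form $\bigl(\delta_n+\tfrac{2\sqrt{a-b}}{\rho_n}T\bigr)e^{LT}$, which goes to $0$, giving uniform convergence on $[0,T]$.

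The main obstacle is essentially bookkeeping rather than a deep difficulty: one must be careful that the confinement argument of Lemma \ref{lempropertiessol} truly transfers to the shifted denominator $\rho_n+r$, since that lemma is stated for the system with denominator $r$ and initial time $0$. The energy identity still holds with the same sign because $\rho_n+s>0$ throughout, so the monotonicity of $H$ and hence the invariance of $\mathcal{A}$ are preserved; I would state this explicitly. The only subtlety worth flagging is that the autonomous limit $(f,g)$ need not stay in $\mathring{\mathcal{A}}$ (it starts on $\partial\mathcal{A}$ when $(\tilde f,\tilde g)\in\partial\mathcal{A}$), but since $H$ is exactly conserved for the autonomous flow and $\mathcal{A}$ is an invariant region for \eqref{eq_autonome}, $(f,g)$ remains in the compact $\mathcal{A}$ for all time and the uniform Lipschitz bound still applies.
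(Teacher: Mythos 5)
Your proof is correct and is the standard continuous-dependence/Gronwall argument that the paper itself leaves implicit (it only cites Lemma~3.2 of \cite{estebanrotanodarirad}, whose proof proceeds in the same way: confinement in the compact set $\mathcal{A}$, a uniform Lipschitz bound for the autonomous field there, and the $O(1/\rho_n)$ estimate on the perturbation). The only step to make fully explicit is the confinement for initial data on $\partial\mathcal{A}$, where $H(\tilde f^n,\tilde g^n)=H(0,1)$ and the strict-inequality contradiction in the proof of Lemma \ref{lempropertiessol} is unavailable: there one should invoke the invariance of the lines $g=\pm1$ (i.e.\ backward uniqueness for the Lipschitz system) to guarantee $(g^n)^2\le1$ for all $r$, after which the monotonicity of $H$ and the compactness of $\mathcal{A}$ give global existence exactly as you say.
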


The proof of this lemma is a straightforward modification of the proof of \cite[Lemma 3.2]{estebanrotanodarirad}.

Next, we introduce the following definition to count the number of times  that the solutions cross the set $\{g=0\}.$
\begin{defn}\label{def_signe}
We say that a continuous function $g$ defined on an interval $I$ changes sign at $r_0\in I$ if $g(r_0)=0$ and there exists $\epsilon>0$ such that $[r_0-\epsilon,r_0+\epsilon]\subset I,$
\[
	g(r_0-\epsilon)g(r_0+\epsilon)<0~\text{and}~g(r_0-r)g(r_0+r)\leq0
\]
for all $r\in(0,\epsilon).$
\end{defn}

\begin{rem}\label{remequivsignvanish} Let $(\fxeta,\gxeta)$ be a nontrivial solution of \eqref{pbeq_nonautonomereg} with $x\in(0,1)$. Then, $\gxeta$ changes sign at $0\le r_0<+\infty$ if and only if $\gxeta$ vanishes at $0\le r_0<+\infty$. 

Indeed, since $(\fxeta,\gxeta)$ is a nontrivial solution and $r_0<+\infty$, $\fxeta(r_0)\neq 0$. Hence, $\gxeta'(r_0)=\fxeta(r_0)(1-\gxeta(r_0)^2)\neq 0$ and $\gxeta$ changes sign at $r_0$.
\end{rem}

Finally, we state the following lemma which gives us an important qualitative property of the solutions of the system (\ref{eq_nonautonomereg}).

\begin{lem}\label{lem_tourexpdecay} Let $x\in(0,1)$. If $(\fxeta,\gxeta)$ is a solution of \eqref{pbeq_nonautonomereg} such that $\gxeta$ changes sign a finite number of times and 
$$
\lim_{r\to+\infty}H(\fxeta,\gxeta)(r)\ge 0,
$$
then, for all $r\ge 0$, 
\begin{equation}\label{eqexpdecay}
\vb \fxeta(r)\vb+\vb \gxeta(r)\vb\le C\exp(-K_{a,b}r)
\end{equation}
with $K_{a,b}=\min\left\{\frac{b}{2},\frac{2a-b}{2a}\right\}$ and $C$ a positive constant.

In particular, we get
\begin{equation}\label{eqconvsol}
\lim_{r\to+\infty}(\fxeta,\gxeta)(r)=(0,0).
\end{equation}
\end{lem}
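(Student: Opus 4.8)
The plan is to show that once $\gxeta$ has stopped changing sign, the solution eventually enters and stays inside a small neighborhood of the origin on which the energy $H$ controls the norm, and then to bootstrap this into an exponential decay estimate. First I would use the hypothesis that $\gxeta$ changes sign only finitely many times: there is some $r_1\ge 0$ past which $\gxeta$ has a constant sign, say $\gxeta>0$ on $[r_1,+\infty)$ (the other case being symmetric). By Remark \ref{rem_decroissanceenergie}, $H(\fxeta,\gxeta)$ is non-increasing, and combined with the hypothesis $\lim_{r\to+\infty}H(\fxeta,\gxeta)(r)\ge 0$, the energy must converge to a limit $H_\infty\in[0,H(0,x)]$. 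Using Lemma \ref{lempropertiessol} the trajectory stays in the compact set $\mathring{\mathcal A}$, so $(\fxeta,\gxeta)$ is bounded; I would then argue, via an $\omega$-limit / LaSalle-type argument using Lemma \ref{lem_deriveenergiereg}, that the dissipation term $\frac{2\varphi_\eta}{r}\fxeta^2(1-\gxeta^2)$ forces the trajectory toward the set where either $\fxeta=0$ or the energy is stationary, pinning the only admissible limit point compatible with $\gxeta>0$ and $H_\infty\ge 0$ to be $(0,0)$.

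Once I know $(\fxeta,\gxeta)(r)\to(0,0)$, the next step is to linearize near the origin to extract the exponential rate. Near $(0,0)$ the system \eqref{eq_nonautonomereg} is, to leading order, $f'\approx bg$ and $g'\approx f$ with the damping term $\frac{2\varphi_\eta}{r}f$ (note $\varphi_\eta\equiv 1$ near the origin since $|f|$ is small); I would instead work directly with the two scalar quantities by estimating $\frac{d}{dr}(\fxeta^2+\gxeta^2)$ or a suitable weighted combination. Concretely, from $g'=f(1-g^2)$ and the localization $g^2<1$, and from the $f$-equation, I expect to obtain differential inequalities of the form $|\gxeta|'\le |\fxeta|$ and a matching one-sided bound on $|\fxeta|$ yielding coupled decay; the constant $K_{a,b}=\min\{b/2,(2a-b)/(2a)\}$ should emerge as the spectral gap of the linearized flow, where the first term $b/2$ governs the $f$-direction decay from the $b\varphi$ term and the second comes from the $(1-g^2)$ factor. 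Establishing \eqref{eqexpdecay} for all $r\ge 0$ then follows by enlarging the constant $C$ to absorb the behavior on the compact initial interval $[0,r_1]$, on which $|\fxeta|+|\gxeta|$ is bounded.

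The main obstacle I anticipate is the first step, namely ruling out that the trajectory converges to a nonzero rest point or fails to converge at all: the presence of the rest points $(\pm\sqrt{a-b},\pm 1)$ of the Hamiltonian system and the vanishing damping coefficient $\frac{2\varphi_\eta}{r}$ (which both tends to zero as $r\to\infty$ and is switched off near those saddle points) means the usual gradient-like convergence arguments do not apply uniformly. I would handle this by combining three facts: the energy bound $H_\infty\ge 0$ excludes the low-energy region around the nonzero equilibria, the fixed sign of $\gxeta$ for large $r$ excludes the points with $g<0$, and the monotone energy together with Lemma \ref{lem_deriveenergiereg} forces $\fxeta(1-\gxeta^2)\to 0$ along a sequence; the localization $\gxeta^2<1$ then gives $\fxeta\to 0$ along that sequence, and feeding this back into the equations pins down the limit as $(0,0)$. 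Once convergence to the origin is secured, the exponential estimate is a comparatively routine differential-inequality argument.
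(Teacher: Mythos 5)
Your overall architecture (fix the eventual sign of $\gxeta$, use the monotone energy together with $\lim H\ge 0$ to force convergence to the origin, then a differential inequality for the rate) matches the paper's, and you correctly identify where the danger lies. But the step you delegate to a LaSalle-type argument is precisely where the real work is, and your proposed substitute does not close it. The dissipation identity only gives $\int^{\infty}\frac{2\varphi_\eta}{r}\fxeta^2(1-\gxeta^2)\,dr<\infty$, hence $\varphi_\eta\fxeta^2(1-\gxeta^2)\to 0$ along a sequence; since $\varphi_\eta$ vanishes wherever $|\fxeta|\ge\sqrt{a-b}-\eta/2$ and $1-\gxeta^2$ is not a priori bounded away from $0$, you cannot deduce $\fxeta\to 0$ along that sequence. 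Moreover, even granting $H\to H_\infty=0$, the $\omega$-limit set for the limiting Hamiltonian flow could a priori be the entire upper homoclinic loop of $\{H=0\}$ (this level set is a figure-eight through the origin, and its upper lobe lies in $\{g\ge 0\}$), which is perfectly compatible with both of your constraints ``$\gxeta>0$ eventually'' and ``$H_\infty\ge 0$''; pinning the limit down to the single point $(0,0)$ requires excluding this, and you give no argument for it.

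The paper closes exactly this gap by an elementary sign analysis that is absent from your proposal: assuming w.l.o.g.\ $\gxeta>0$ for $r>R$, it shows (i) $\fxeta$ cannot stay positive forever, since then $\gxeta$ would increase to a limit and, via the perturbation Lemma \ref{lemconvconssol}, the trajectory would converge to $\left(0,\sqrt{b/a}\right)$, forcing $H_\infty=H\left(0,\sqrt{b/a}\right)<0$; and (ii) $\fxeta$ vanishes at most once after $R$, because at a second zero $R'$ one necessarily has $0<\gxeta(R')\le\sqrt{b/a}$ (from $\fxeta'(R')\ge0$) and hence $H_\infty\le H(0,\gxeta(R'))<0$. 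This yields $\fxeta<0<\gxeta$ eventually, so $\gxeta$ is monotone, converges, and the limit is identified as $(0,0)$ again via Lemma \ref{lemconvconssol} and the energy. Note that your decay step also needs exactly this sign information: the linearization at the origin is a saddle ($f'\approx bg$, $g'\approx f$), so no symmetric quantity such as $\fxeta^2+\gxeta^2$ decays; the paper uses the combination $\gxeta-\fxeta$, which is positive and satisfies $(\gxeta-\fxeta)'+K_{a,b}(\gxeta-\fxeta)\le 0$ only because $\fxeta<0<\gxeta$ and $\gxeta^2\le b/(2a)$ for large $r$. The missing sign lemma is therefore not a technicality you can route around; it is the content of the proof.
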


\begin{proof} 
 First of all, we remark that if $\lim\limits_{r\to+\infty}\gxeta(r)=\delta$, then $|\delta|\ne1$. Moreover, if $\delta\ne 0$ then $(\fxeta,\gxeta)$ tends either to $(0,\sqrt{\frac{b}{a}})$ or to $(0,-\sqrt{\frac{b}{a}})$ as $r$ goes to $+\infty$. Indeed, suppose by contradiction that $\delta=\pm1$, then 
\[
	\lim\limits_{r\rightarrow+\infty}H(\fxeta,\gxeta)(r) = H(0,1),
\]
which contradicts the monotonicity of $H$ (Lemma \ref{lem_deriveenergiereg}) since $H(0,x)<H(0,1)$ and $x\in (0,1)$.
Hence, $-1< \delta<1$. Next, suppose $\delta\neq 0$ and let $\{r_n\}_n$ be a sequence such that $\lim\limits_{n\to +\infty}r_n=+\infty$ and $\lim\limits_{n\to+\infty}\fxeta(r_n)=k$ for some $k\in \mathbb{R}$. Let $(u,v)$ be the solution of \eqref{eq_autonome} with initial data $(k,\delta)$. It follows from Lemma \ref{lemconvconssol} that $(\fxeta(r_n+\cdot),\gxeta(r_n+\cdot))$ converges uniformly to $(u,v)$ on bounded intervals. Since, $\lim\limits_{n\to+\infty}\gxeta(r_n+r)=\delta$ for any $r>0$, we have $v(r)=\delta\in(-1,0)\cup(0,1)$ for any $r\ge 0$. Hence, from the second equation of \eqref{eq_autonome},  we obtain $u(r)=0$ for all $r>0$. This means that $(u,v)$ is an equilibrium point of \eqref{eq_autonome}, and, since $\delta\in(-1,0)\cup(0,1)$, this implies $k=0$ and $\delta=\pm\sqrt\frac{b}{a}$. As a conclusion, $\fxeta$ converges as $r$ goes to $+\infty$, $\lim\limits_{r\to+\infty}(\fxeta,\gxeta)(r)=\left(0,\pm\sqrt\frac{b}{a}\right)$ and 
$$
\lim\limits_{r\to+\infty}H(\fxeta,\gxeta)(r)= H\left(0,\sqrt\frac{b}{a}\right)<0.
$$

Next, we claim that, if $\gxeta$ changes sign a finite number of times and 
$$\lim\limits_{r\to+\infty}H(\fxeta,\gxeta)(r)\ge 0,$$ then there exists $\bar R<+\infty$ such that 
\begin{itemize}
\item or $\gxeta(r)>0$ and $\fxeta(r)<0$ for all $r>\bar R$, 
\item or $\gxeta(r)<0$ and $\fxeta(r)>0$ for all $r>\bar R$.
\end{itemize}
Indeed, by Remark \ref{remequivsignvanish}, if $\gxeta$ changes sign a finite number of times, then $\gxeta$ vanishes a finite number of times and there exists $R<+\infty$ such that $\gxeta(r)>0$ or $\gxeta(r)<0$ for all $r>R$. Thanks to the symmetries of the problem, we can suppose w.l.o.g. that $\gxeta(r)>0$ for all $r>R$.

Hence, it remains to prove that there exists $R< \bar R<+\infty$ such that $\fxeta(r)<0$ for all $r>\bar R$. We proceed as follows : first we prove that we cannot have $\fxeta(r)>0$ for all $r> R$ and second we show that $\fxeta$ vanishes at most once in $[R,+\infty)$.

\noindent \emph{Step 1.} Suppose, by contradiction, that $\fxeta(r)>0$ for all $r>R$. This implies that $\gxeta(r)$ is increasing for all $r>R$ and $\lim\limits_{r\to+\infty}\gxeta(r)=\delta$ with $0<\delta\leq1$. 

 Hence, as we proved above, we have $\lim\limits_{r\to+\infty}(\fxeta,\gxeta)(r)=\left(0,\sqrt\frac{b}{a}\right)$ and
$$
\lim\limits_{r\to+\infty}H(\fxeta,\gxeta)(r)= H\left(0,\sqrt\frac{b}{a}\right)<0
$$ 
which contradicts the fact that $\lim\limits_{r\to+\infty}H(\fxeta,\gxeta)(r)\ge 0$ (see Remark \ref{remzerosf} for an alternative proof).

As a consequence, there exists $R<\bar R<+\infty$ such that $\fxeta(\bar R)=0$. Let us remark moreover that for such a $\bar R,$ we have $\fxeta'(\bar R)<0.$

\noindent \emph{Step 2.} Suppose next, by contradiction, that there exist { a positive constant $R'$} such that $R<\bar R<R'<+\infty,$  $\fxeta(R')=0$ and $\fxeta(r)<0$ on $(\bar R,R')$. Since $\fxeta'$ has to be nonnegative in a neighborhood of $R'$, we can conclude that $0<\gxeta(R')\le \sqrt\frac{b}{a}$. Hence, 
$$
\lim\limits_{r\to+\infty}H(\fxeta,\gxeta)(r)\le H\left(0,\gxeta(R')\right)<0
$$ 
which contradicts the fact that $\lim\limits_{r\to+\infty}H(\fxeta,\gxeta)(r)\ge 0$.

As a conclusion, there exists $\bar R<+\infty$ such that $\gxeta(r)>0$ and $\fxeta(r)<0$ for all $r>\bar R$. This implies that $\gxeta(r)$ is decreasing for all $r>\bar R$ and $\lim\limits_{r\to+\infty}\gxeta(r)=\delta$ with $0\le\delta<1$.  We claim that $\delta=0$. Indeed, suppose by contradiction, $\delta\neq 0$. As above, we obtain $\lim\limits_{r\to+\infty}(\fxeta,\gxeta)(r)=\left(0,\sqrt\frac{b}{a}\right)$ and
$$
\lim\limits_{r\to+\infty}H(\fxeta,\gxeta)(r)= H\left(0,\sqrt\frac{b}{a}\right)<0
$$ 
which contradicts the fact that $\lim\limits_{r\to+\infty}H(\fxeta,\gxeta)(r)\ge 0$ (see Remark \ref{remzerosf} for an alternative proof).

Hence, $\lim\limits_{r\to+\infty}\gxeta(r)=0$ and $\gxeta^2(r)\le \frac{b}{2a}$ for $r$ large enough. Considering \eqref{eq_nonautonomereg},
$$
\fxeta'\ge -\frac{2\varphi_\eta(\fxeta,\gxeta)}{r}\fxeta+\frac{b}{2}\gxeta\,,\quad \gxeta'\le \frac{2a-b}{2a}\fxeta.
$$
Thus, for $r$ large enough, 
$$
(\gxeta-\fxeta)'+K_{a,b}(\gxeta-\fxeta)\le 0,
$$
with $K_{a,b}=\min\left\{\frac{b}{2},\frac{2a-b}{2a}\right\}$. Integrating the above equation, we obtain
$$
|\fxeta(r)|+ |\gxeta(r)|\le C\exp(-K_{a,b}r)
$$
for all $r\geq 0$ with $C>0$.

With exactly the same arguments, we treat the case $\gxeta(r)<0$ for all $r>R$.
\end{proof}

{ \begin{rem}\label{remzerosf} The proof is very similar to the one of \cite[Lemma 3.4]{estebanrotanodarirad}. With the same arguments of \cite[Proof of Lemma 3.4]{estebanrotanodarirad}, we can prove that 
if $x\in(0,1)$ and $(\fxeta,\gxeta)$ is a solution of \eqref{pbeq_nonautonomereg} such that $$\lim\limits_{r\to+\infty}(\fxeta,\gxeta)(r)=\left(0,\pm\sqrt\frac{b}{a}\right),$$ then $\fxeta$ has infinitely many zeros.

This property is equivalent to the fact that $(\fxeta,\gxeta)$ cannot tend to $\left(0,\pm\sqrt\frac{b}{a}\right)$, while being in one of the half-planes $\{f>0\}$ or $\{f<0\}$.

This remark allows us to prove in an alternative way that if $\gxeta(r)>0$ for all $r>R$, then $\fxeta$ has to vanish at least once in $(R,+\infty)$ without using the fact that  $\lim\limits_{r\to+\infty}H(\fxeta,\gxeta)(r)\ge 0$ (\emph{Step 1} of the proof of Lemma \ref{lem_tourexpdecay}).

Moreover, it proves also that if $\gxeta(r)>0$  and $\fxeta(r)<0$ for all $r>\bar R$, then $\lim\limits_{r\to+\infty}(\fxeta,\gxeta)(r)=(0,0)$ without using $\lim\limits_{r\to+\infty}H(\fxeta,\gxeta)(r)\ge 0$  (end of the proof of Lemma \ref{lem_tourexpdecay}).
\end{rem}}


%
%
%
%

\subsection{The shooting method}
\tikzstyle{background grid}=[draw, black!50,step=.5cm]
	\begin{figure}[h!]
	\begin{center}
	\begin{tikzpicture}
         \node () at (0,0) {\includegraphics[]{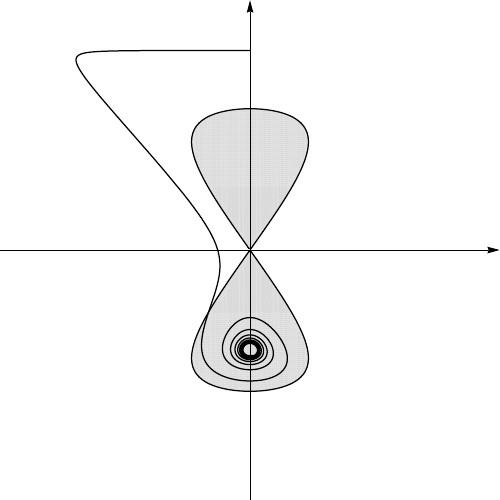}};
         \node (f1) at (2.7,0) {$f$};
         \node (g1) at (0,2.7) {$g$};
         \node (x1) at (0.3,2) {$x_1$};
         \node () at (5.5,0) {\includegraphics[]{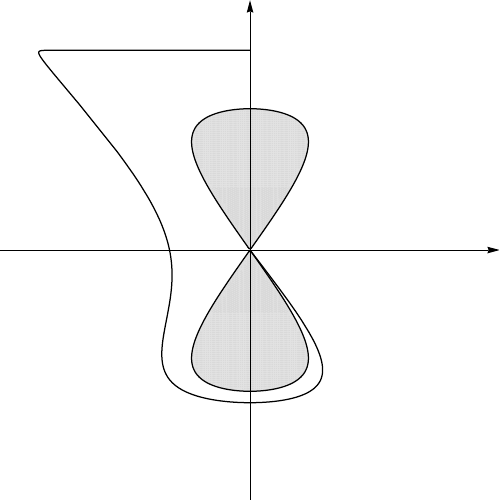}};
         \node (f2) at (2.7+5.5,0) {$f$};
         \node (g2) at (5.5,2.7) {$g$};
         \node (x2) at (5.8,2) {$x_2$};
         \node () at (2.75,-5.25) {\includegraphics[]{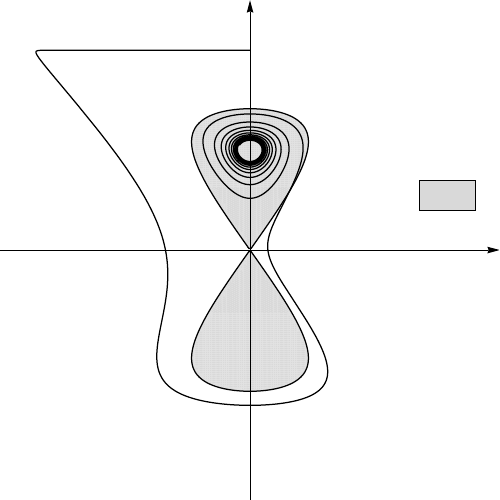}};
         \node (f3) at (2.7+2.75,-5.25) {$f$};
         \node (g3) at (2.75,2.7-5.25) {$g$};
         \node (x3) at (0.3+2.75,2-5.25) {$x_3$};
         \node (H0) at (2.9+2.75,0.55-5.25) {$H\le 0$};
	\end{tikzpicture}
	\caption{$x_1\in A_1$, $x_2\in I_1$, $x_3\in A2$}
	\label{figAkIk}
	\end{center}
\end{figure}
Following \cite{balabane2003}, we define $I_{-1}=\emptyset$ and, for $k\in\mathbb{N}$ and $\eta\in(0,\sqrt{a-b}-\sqrt{\frac{a}{2}})$ fixed,
\begin{align*}
	&A_k 	= \{x\in(0,1):\underset{r\rightarrow+\infty}{\lim} H(\fxeta,\gxeta)(r)<0, \gxeta\;  \text{changes sign $k$ times on $\mathbb{R}^+$}\},\\
	&I_k 	= \{x\in(0,1):\underset{r\rightarrow+\infty}{\lim} (\fxeta,\gxeta)(r)=(0,0), \gxeta\;  \text{changes sign $k$ times on $\mathbb{R}^+$}\}
\end{align*}
(see Figure \ref{figAkIk}).

\begin{rem}
By Lemma \ref{lempropertiessol}, we get that $(\fxeta,\gxeta)(r)\in \mathcal{A}$ for all $r$ whenever $x\in[0,1]$. Remark \ref{rem_decroissanceenergie} ensures then that $\underset{r\rightarrow+\infty}{\lim}~ H(\fxeta,\gxeta)(r)$ exists for all $x\in [0,1]$.
\end{rem}
\begin{rem}
We want to find non trivial localized solutions of equations \eqref{eq_nonautonomereg} with a given number of nodes that is to say, $x\in (0,1)$ such that 
\[
	\underset{r\rightarrow +\infty}{\lim}~(\fxeta,\gxeta)(r)=(0,0)
\] 
and $\gxeta$ changes sign $k$ times on $\mathbb{R}^+$. 
To do this, we show by a shooting method that 
\[
	I_k\ne \emptyset
\]
for all $k\in \mathbb{N}$ and all $\eta\in(0,\sqrt{a-b}-\sqrt{\frac{a}{2}}).$
\end{rem}
The core of the shooting method is the following lemma which gives the main properties of the sets $A_k$ and $I_k.$ It is very similar to the properties stated in the proof of \cite[Theorem 1]{balabane2003} except that the sets $A_k$ and $I_k$ are always bounded since they are included in $(0,1)$. The good equivalent property which is adapted to our case is given by point \eqref{lem_AkIkborne} of the next lemma.
\begin{lem}\label{lem_proprieteAkIk}
	For all $k$ in $\mathbb{N}$ and all $\eta\in(0,\sqrt{a-b}-\sqrt{\frac{a}{2}})$ we have
	\begin{enumerate}[(i)]
		\item \label{lem_Akouvert} 	$A_k$ is an open set,
		\item \label{lem_AkIkborne} 	there is $\epsilon\in (0,1)$ such that $A_k\cup I_k\subset (0,1-\epsilon),$
				\item \label{lem_trapping} 	if $x\in I_k, $ there exists  $\epsilon>0$ such that $(x-\epsilon,x+\epsilon)\subset A_k\cup I_k \cup A_{k+1}$
		\item \label{lem_supAk} 	if $A_k$ is not empty, we have $\sup A_k\in I_{k-1}\cup I_k,$
		\item \label{lem_supIk} 	if $I_k$ is not empty, we have $\sup I_k\in  I_k,$
	\end{enumerate}
\end{lem}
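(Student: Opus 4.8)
The plan is to prove the five properties by exploiting the continuity of the flow (Lemma \ref{lemexistencesoleta}), the monotonicity of the energy $H$ (Lemma \ref{lem_deriveenergiereg} and Remark \ref{rem_decroissanceenergie}), the dichotomy from Lemma \ref{lem_tourexpdecay}, and the perturbation result (Lemma \ref{lemconvconssol}). I will treat them roughly in the stated order, since later items lean on earlier ones.

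For \eqref{lem_Akouvert}, I would argue that if $x\in A_k$ then $\lim_{r\to+\infty}H(\fxeta,\gxeta)(r)<0$, so by Lemma \ref{lem_tourexpdecay} (in its contrapositive form) and Remark \ref{remzerosf}, the solution tends to one of $(0,\pm\sqrt{b/a})$ while the $k$ sign changes of $\gxeta$ all occur at finite radii where $\fxeta\ne0$, hence are transverse crossings. Pick $R$ large enough that $H(\fxeta,\gxeta)(R)<0$ and all $k$ crossings lie in $(0,R)$ with $\gxeta'\ne0$ there; by continuous dependence on $x$ (uniformly on $[0,R]$), nearby initial data $x'$ produce solutions with exactly $k$ transverse crossings in $(0,R)$ and with $H(f_{x',\eta},g_{x',\eta})(R)<0$. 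Since $H$ is nonincreasing, its limit stays negative, and by Lemma \ref{lem_tourexpdecay} no further sign change can appear (a new crossing after $R$ would force $\fxeta$ to change sign in a way incompatible with the eventual monotone convergence); thus $x'\in A_k$ and $A_k$ is open.

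Property \eqref{lem_AkIkborne} is the crux, and it is exactly the obstacle the introduction flags: I must show the crossing number cannot blow up as $x\to1^-$. The strategy is the Hamiltonian-regularization argument sketched in the introduction. For $x$ close to $1$, continuity of the flow compares $(\fxeta,\gxeta)$ to the limiting trajectory issuing near $(-\sqrt{a-b},1)$; in the regularized system the dynamics near each saddle $(\pm\sqrt{a-b},\pm1)$ is genuinely Hamiltonian \eqref{eq_autonome}, so I can use the explicit level sets of $H$ to control the exit point from a neighborhood of the saddle after a (possibly long but finite) time, and then iterate. The key quantitative claim is that the \emph{number} of sign changes produced before the energy has dropped below $0$ is bounded uniformly in $x$ by a function of $k$; equivalently, producing a $(k+1)$-st crossing forces $x$ to exceed some threshold $1-\epsilon$. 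This is where the bulk of the work (and the deferred key lemmas of Section \ref{secregularized}) must be invoked, so I expect \eqref{lem_AkIkborne} to be the main difficulty by far.

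For the remaining three, I would proceed by soft arguments. For \eqref{lem_trapping}: if $x\in I_k$ then $(\fxeta,\gxeta)\to(0,0)$ with exactly $k$ transverse sign changes of $\gxeta$, all at finite radii; choosing $R$ past the last crossing and using continuous dependence, nearby $x'$ still have at least $k$ transverse crossings in $(0,R)$, while the energy limit and Lemma \ref{lem_tourexpdecay} constrain the tail so that $\gxeta$ acquires either no further crossing (landing in $A_k\cup I_k$) or exactly one more (landing in $A_{k+1}$); ruling out two or more extra crossings near a solution that was asymptotic to $(0,0)$ is the content, handled via the dichotomy of Lemma \ref{lem_tourexpdecay} together with the fact that an extra excursion costs a definite amount of rotation. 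For \eqref{lem_supAk} and \eqref{lem_supIk}: by \eqref{lem_AkIkborne} the suprema are genuine points of $(0,1)$, and I pass to the limit along an increasing sequence $x_n\uparrow\sup A_k$ (resp.\ $\sup I_k$) using uniform-on-compacts convergence; the limit solution inherits $k$ (or $k-1$) crossings and, because $\sup A_k\notin A_k$ (as $A_k$ is open by \eqref{lem_Akouvert}), its energy limit must be $\ge0$, forcing convergence to $(0,0)$ by Lemma \ref{lem_tourexpdecay}, i.e.\ membership in $I_{k-1}\cup I_k$; the analogous closedness argument gives $\sup I_k\in I_k$.
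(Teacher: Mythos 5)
Your outline reproduces the paper's architecture, but the two hardest points are not actually proved. For point \eqref{lem_AkIkborne} you restate the strategy already given in the introduction (compare with the limiting trajectory, regularize near the saddles, iterate) and then explicitly defer ``the bulk of the work'' to unspecified lemmas; that is an acknowledgement of the difficulty, not a proof. The paper's argument is a concrete induction on properties $(H^k_i)$ attached to the four points $X_1,\dots,X_4$ on $\partial\mathcal A$: one proves $(H^0_1)$, then the chain $(H^k_1)\Rightarrow(H^{k+1}_2)$, $(H^k_2)\Rightarrow(H^k_3)$, $(H^k_3)\Rightarrow(H^{k+1}_4)$, $(H^k_4)\Rightarrow(H^k_1)$, each implication using that the system is exactly Hamiltonian in $\{|f|\ge f_0\}$ together with the inverse branches $G_1,G_2$ of Lemma \ref{lem_EtudeReg} to locate the exit point from the corner region on a controlled level set of $H$. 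None of that content appears in your proposal.

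The second genuine gap is point \eqref{lem_trapping}. A soft continuity argument plus ``an extra excursion costs a definite amount of rotation'' cannot work: the reference solution with $x\in I_k$ terminates at $(0,0)$, which is a saddle of $H$ at energy $0$, so perturbed solutions leave every neighborhood of the origin with arbitrarily small positive energy and could a priori loop around the centers $(0,\pm\sqrt{b/a})$ several times before their energy becomes negative; Lemma \ref{lem_tourexpdecay} cannot bound the number of such loops since it presupposes finitely many sign changes. What excludes more than one extra crossing is the quantitative trapping Lemma \ref{lem_trapping1}: in the strip $g^2\le 2b/a$ one has $\frac{d}{dr}\left(r^4H(\fxeta,\gxeta)\right)\le 0$, crossing the band $\frac13\sqrt{2b/a}\le|g|\le\frac23\sqrt{2b/a}$ costs at least $c_0R^3$ of weighted energy, hence $H(R)<c_0/R$ forces $H$ negative before a second return to $\{g=0\}$, after which $g$ cannot vanish because $H(f,0)=f^2/2\ge0$. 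This same lemma is what pins down $m\in\{k-1,k\}$ in your closedness argument for \eqref{lem_supAk}, where you simply assert the crossing count of the limit. Your points \eqref{lem_Akouvert}, \eqref{lem_supAk}, \eqref{lem_supIk} are otherwise in the right spirit, though in \eqref{lem_Akouvert} the reason no new crossing appears after $R$ is the persistence of $H<0$ and $\{g=0\}\cap H^{-1}((-\infty,0))=\emptyset$, not Lemma \ref{lem_tourexpdecay}.
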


The proof of this lemma is given in Section \ref{sec_proofsshootingproperties}. We are now able to prove the following proposition.
\begin{prop}\label{prop_shooting}
	There exists an increasing sequence $\{x_k\}_{k\geq0}\subset (0,1)$ such that $x_k\in I_k$ for all $k\in \mathbb{N}$.
\end{prop}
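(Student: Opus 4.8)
The plan is to construct the increasing sequence $\{x_k\}_{k\ge0}$ by induction on $k$, using the properties of the sets $A_k$ and $I_k$ collected in Lemma \ref{lem_proprieteAkIk}. The underlying idea of the shooting method is that as the initial datum $x$ increases from $0$ toward $1$, the solution $(\fxeta,\gxeta)$ acquires more and more sign changes of $\gxeta$; the localized solutions with exactly $k$ nodes are then obtained at the ``boundary'' values separating the regimes with $k$ and $k+1$ sign changes. The key structural facts I would exploit are: $A_k$ is open (point \eqref{lem_Akouvert}); the sets $A_k\cup I_k$ are uniformly bounded away from $1$ (point \eqref{lem_AkIkborne}), which guarantees the suprema below are genuine maxima in $(0,1)$; and the two ``supremum'' statements \eqref{lem_supAk} and \eqref{lem_supIk}, which assert that taking the supremum of $A_k$ or of $I_k$ lands us back in an $I$-set.

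For the base case, I would first argue that $A_0$ is nonempty. This should follow from a continuity/small-data argument: for $x>0$ small, the solution $(\fxeta,\gxeta)$ stays close to the trivial solution, so $\gxeta$ does not change sign, and by the energy analysis (Remark \ref{rem_decroissanceenergie} together with the dichotomy established in the proof of Lemma \ref{lem_tourexpdecay}) the limit of $H$ is strictly negative; hence such small $x$ lie in $A_0$. Granting $A_0\ne\emptyset$, point \eqref{lem_supAk} gives $x_0:=\sup A_0\in I_{-1}\cup I_0=I_0$ since $I_{-1}=\emptyset$. This produces the first term $x_0\in I_0$, and by \eqref{lem_AkIkborne} we have $x_0<1$.

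For the inductive step, suppose $x_k\in I_k$ has been constructed. By point \eqref{lem_trapping}, there is $\epsilon>0$ with $(x_k-\epsilon,x_k+\epsilon)\subset A_k\cup I_k\cup A_{k+1}$, and I would use this to show that $A_{k+1}$ is nonempty: points immediately to the right of $x_k$ that are not in $I_k$ must fall into $A_{k+1}$ (one checks they cannot be in $A_k$, since $A_k$ is open and $x_k=\sup A_k$ or by the local structure near $x_k$). Once $A_{k+1}\ne\emptyset$, I set $x_{k+1}:=\sup A_{k+1}$, and point \eqref{lem_supAk} yields $x_{k+1}\in I_k\cup I_{k+1}$. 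To force $x_{k+1}\in I_{k+1}$ rather than $I_k$, I would show $x_{k+1}>x_k$: indeed any point of $A_{k+1}$ exceeds $x_k$ (to the left of $x_k$ the solution has at most $k$ sign changes, so such points cannot give $k+1$ sign changes), whence $\sup A_{k+1}\ge x_k$, and strictly so because $x_k\in I_k$ has a neighborhood meeting $A_{k+1}$ only on its right. Since $x_k=\sup I_k$ by \eqref{lem_supIk} and the sign-change count is monotone enough across $x_k$, the value $x_{k+1}$ cannot lie in $I_k$, leaving $x_{k+1}\in I_{k+1}$. This simultaneously proves $x_{k+1}>x_k$, giving the strict monotonicity of the sequence.

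The step I expect to be the main obstacle is the inductive passage showing $A_{k+1}\ne\emptyset$ and, in tandem, that $x_{k+1}=\sup A_{k+1}$ genuinely lands in $I_{k+1}$ and strictly above $x_k$. The delicate point is bookkeeping the sign-change count as $x$ crosses the critical value $x_k\in I_k$: one must use the trapping property \eqref{lem_trapping} to certify that new sign changes appear on the right of $x_k$, and exclude the possibility that the supremum collapses back to a solution with only $k$ nodes. This is exactly where the uniform bound \eqref{lem_AkIkborne} is essential, as it prevents the constructed values from escaping to $1$ and keeps every supremum an attained maximum inside $(0,1)$.
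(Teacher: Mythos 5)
Your overall strategy --- induction on $k$, producing $x_k$ as a supremum of $A_k$ and invoking points \eqref{lem_Akouvert}--\eqref{lem_supIk} of Lemma \ref{lem_proprieteAkIk} --- is the same as the paper's, and your base case reaches the right conclusion (though the claim that $\gxeta$ does not change sign for small $x$ cannot be justified by proximity to the trivial solution on an unbounded time interval; the correct reason is that $H(0,x)<0$ for $x\in(0,\sqrt{2b/a})$, $H$ is non-increasing, and $\{g=0\}\subset\{H\geq0\}$, so $\gxeta$ never vanishes and $\lim_{r\to+\infty}H<0$).

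The inductive step, however, has a genuine gap. To show $A_{k+1}\ne\emptyset$ and to rule out $\sup A_{k+1}\in I_k$, you invoke two claims that are nowhere established: that ``to the left of $x_k$ the solution has at most $k$ sign changes'' (a monotonicity of the node count in $x$), and that $x_k=\sup I_k$. Neither is available: the sets $A_j$, $I_j$ are not known to be ordered along $(0,1)$, and point \eqref{lem_supAk} only gives $\sup A_k\in I_{k-1}\cup I_k$, hence at best $\sup A_k\le\sup I_k$, not equality. Moreover, applying the trapping property \eqref{lem_trapping} at $x_k=\sup A_k$ need not produce any point of $A_{k+1}$: the whole right half of that neighborhood could a priori lie in $I_k$. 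The correct bookkeeping (and what the paper does) is to carry the inequality $\sup I_{k-1}<\sup A_k$ through the induction: it forces $\sup A_k\in I_k$ (excluding $I_{k-1}$ in \eqref{lem_supAk}), hence $I_k\ne\emptyset$ and $\sup A_k\le\sup I_k$; then $\sup I_k\in I_k$ by \eqref{lem_supIk}, and applying \eqref{lem_trapping} at the point $\sup I_k$ yields $(\sup I_k,\sup I_k+\epsilon)\subset A_{k+1}$, since points to the right of $\sup I_k$ are excluded from $I_k$ by definition of the supremum and from $A_k$ because $\sup A_k\le\sup I_k$. This gives simultaneously $A_{k+1}\ne\emptyset$ and $\sup I_k<\sup A_{k+1}$, and the latter inequality is exactly what excludes $\sup A_{k+1}\in I_k$ at the next stage. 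Without establishing this inequality, your construction cannot certify that $x_{k+1}$ lies in $I_{k+1}$ rather than in $I_k$.
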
  
The proof is essentially the same as in \cite{balabane2003}. We write it down here for sake of completeness. 
\begin{proof}
 	We prove by induction that for all $k\in \mathbb{N},$  
	\begin{align*}
		&A_k\ne\emptyset,\\
		&\sup I_{k-1}< \sup A_k.
	\end{align*}
	If this property is true for all $k$, then $A_k$ is not empty, $\sup A_k\in I_k$ by point \eqref{lem_supAk} of Lemma \ref{lem_proprieteAkIk} and $\sup A_{k}\leq \sup I_{k}<\sup A_{k+1}$. Hence, if we choose $x_k=\sup A_k$ we get the proposition.
	\begin{enumerate}
		\item Let $k=0.$ We have that for all $x\in(0,\sqrt{\frac{2b}{a}})$ 
	\[
		H(0,x)<0.
	\]
	Thus, Lemma \ref{lempropertiessol} and remark \ref{rem_decroissanceenergie} ensure that 
	\[
		(0,\sqrt{\frac{2b}{a}}) \subset A_0
	\]
	and
	\[
		-\infty=\sup I_{-1}<\sup A_0.
	\]
	\item Let us assume now that for some $k\in\mathbb{N}$, we have
	\begin{align*}
		&A_k\ne\emptyset,\\
		&\sup I_{k-1}< \sup A_k.
	\end{align*}
	By point \eqref{lem_supAk} of Lemma \ref{lem_proprieteAkIk}, we get $\sup A_k\in I_{k}$ which implies $I_k\ne \emptyset$ and { $\sup A_k\leq \sup I_k$}. Since $I_k\ne \emptyset$, by point \eqref{lem_supIk}, we obtain that $\sup I_k\in I_k$ and, since $\sup A_k\leq \sup I_k$, point \eqref{lem_trapping} ensures that there is $\epsilon>0$ such that 
	\[
		(\sup I_k,\sup I_k +\epsilon) \subset A_{k+1}.
	\]
	As a conclusion, we have
		\begin{align*}
		&A_{k+1}\ne\emptyset,\\
		&\sup I_{k}< \sup A_{k+1}.
	\end{align*}
	\end{enumerate}
\end{proof}
%
%
%
\section{Proof of Lemma \ref{lem_proprieteAkIk}}\label{sec_proofsshootingproperties}
In this section, we fix $\eta\in (0,\sqrt{a-b}-\sqrt{\frac{a}{2}})$ and we prove Lemma \ref{lem_proprieteAkIk}. 

%
\subsection{Proof of point \eqref{lem_AkIkborne} of Lemma \ref{lem_proprieteAkIk}}
\begin{rem}
This proof is the most technical point of the paper and contains the main novelties of our work. The introduction of the Hamiltonian regularization of subsection \ref{subsec_construtionreg} allow us 
 to control the behavior of the solution in the neighborhood of the stationary points $(\pm\sqrt{a-b},\pm 1)$ of the autonomous system of equations \eqref{eq_autonome}.
\end{rem}
We show by induction that for all $k$, there is  $\epsilon\in(0,1)$  such that if $x\in(1-\epsilon,1)$ then $\gxeta$ has at least $k+1$ changes of sign on $\mathbb{R}^+.$ This implies 
\[
	\underset{0\leq i\leq k}{\bigcup}A_i\cup I_i\subset (0,1-\epsilon)
\]
and  point \eqref{lem_AkIkborne} follows. 
\begin{rem}
The idea of the proof is that we can control the solutions $(\fxeta,\gxeta)$ thanks to the continuity of the flow on the parameter $x$ (see Lemma \ref{lemexistencesoleta}) comparing $(\fxeta,\gxeta)$ to $(f_{1,\eta},g_{1,\eta})$ on an interval of the type $[0,R]$ for $R>0$. Moreover, $(f_{1,\eta},g_{1,\eta})$ tends to a stationary point $(-\sqrt{a-b},1)$ of the system \eqref{eq_autonome}. Thus, $(\fxeta,\gxeta)$ stay in a neighborhood of $(-\sqrt{a-b},1)$ a very long time if $x$ is { sufficiently} close to $1$. We also know thanks to Lemma \ref{lem_tourexpdecay} that $(\fxeta,\gxeta)$ exits this neighborhood at finite time, possibly very large. The problem is that we have to control the position of $(\fxeta,\gxeta)$ when this occurs. To do this, we replace the system  \eqref{eq_nonautonome} by the Hamiltonian ones \eqref{eq_autonome} in this neighborhood. Then, we can use the  conservation of the energy $H$ along the trajectory of $(\fxeta,\gxeta)$ to know the position of $(\fxeta,\gxeta)$ when it exits the neighborhood of $(-\sqrt{a-b},1)$. 

After that, we can control the solutions $(\fxeta,\gxeta)$ thanks to the continuity of the flow comparing $(\fxeta,\gxeta)$ to a solution $(f,g)$ of \eqref{eq_autonome} that remains at all times on $\partial \mathcal{A}$ and tends to $(-\sqrt{a-b},-1)$ at infinity. We get that if $x$ is close enough to $1$ then $\gxeta$ changes sign one time. We iterate this reasoning {to obtain a solution for which $\gxeta$ changes sign more than $k$ times on $\mathbb R^+$}.
\end{rem}

{
\textbf{Step 1. Proof by induction}

First of all, we take $f_0 = \sqrt{a-b}-\eta/2>\sqrt{\frac{a}{2}}>\sqrt{\frac{a-2b}{2}}$ and we define 
\begin{align*}
X_1&:=\left(-f_0,\sqrt{\frac{2f^2_0-(a-2b)}{a}}\right)\\
X_2&:=\left(-f_0,-1\right)\\
X_3&:=\left(f_0,-\sqrt{\frac{2f^2_0-(a-2b)}{a}}\right)\\
X_4&:=\left(f_0,1\right).
\end{align*}
The points $X_i$ are on $\partial \mathcal A$,  for $i=1,\ldots,4$.
Furthermore, remind that $\varphi_\eta(f,g)=0$ whenever $\vb f\vb \ge f_0$ (see Figure \ref{regsystem}).

\begin{defn}
Let $k\in\mathbb{N}$ and $i\in\{1,\dots,4\}$ be given. We denote by $(H^k_i)$ the following property:
\begin{enumerate}
\item[]
for all $\gamma$ and $R$ positive constants given, there exists $\epsilon>0$ such that for any $x\in(1-\epsilon,1)$, there exists a positive constant $\tilde{R}>R$ which satisfies
\[
	(\fxeta,\gxeta)(\tilde{R})\in B(X_i, \gamma)\cap \mathcal{A}
\]
and such that $\gxeta$ change $k$ times of sign in $[0,\tilde{R}]$.
\end{enumerate}
\end{defn}

In the second step, we show that the properties $(H^0_1)$ is true. Next, in the third step, we prove that for $k\in\mathbb{N}$ given we have
\[\left\{\begin{array}{l}
	(H^k_1)\Rightarrow (H^{k+1}_2),\\
	(H^k_2)\Rightarrow (H^{k}_3),\\
	(H^k_3)\Rightarrow (H^{k+1}_4),\\
	(H^k_4)\Rightarrow (H^{k}_1)
\end{array}\right.\]
so that 
\[
	(H^k_1)\Rightarrow (H^{k+2}_1).
\]
As a consequence, we get by induction that the property $(H^{2k}_1)$ is true for all $k\in\mathbb{N}.$ In particular, there is $\epsilon\in(0,1)$ such that for all $x\in(1-\epsilon,1),$ $\gxeta$ changes at least $2k$ of sign on $[0,+\infty)$ so that 
\[
	I_i\cup A_i\subset (0,1-\epsilon).
\]
for all $i\in\{0,1,\dots,2k-1\}$ and point \eqref{lem_AkIkborne} of Lemma \ref{lem_proprieteAkIk} is proved.

\textbf{Step 2. Initialization:} 
We prove that $(H^0_1)$ is true.
%
%
%
%
\begin{enumerate}
\item
\textbf{Preliminary results.} 
Let $\gamma$ and $R$ be positive constants given.
First of all, remark that with the notation of Lemma \ref{lem_EtudeReg}, $$X_1=(-f_0,G_2(H(0,1))).$$  
 By continuity of $G_2,$ there exists $\delta>0$ such that $H(0,1)-\delta>E_c$ and 
 \[
 	\|(-f_0,G_2(E))-X_1\|<\gamma
 \]
 for all $E\in (H(0,1)-\delta,H(0,1))$ where $\|.\|$ is the Euclidean norm of $\mathbb{R}^2$.
So, we have to prove that  there exists $\epsilon>0$ such that for any $x\in(1-\epsilon,1)$, there exists a positive constant $R_1>R$ which satisfies
\[
H(0,1)-\delta<H(\fxeta,\gxeta)(R_1)< H(0,1),
\]
 $\fxeta(R_1)=-f_0$, $\gxeta(R_1)=G_2(H(\fxeta,\gxeta)(R_1))$ and $\gxeta$ does not change sign in $[0,R_1]$.

\item
\textbf{Control of the solutions of \eqref{pbeq_nonautonomereg} in an interval $[0,\overline R]$ with   $\overline R>0$.}
We denote $(f,g)$ the solution of Cauchy problem \eqref{pbeq_nonautonomereg} with $x=1$. It is easy to see that 
\[
	H(f,g)(r)=H(0,1),~g(r)=1,~f(r)>-\sqrt{a-b}~\text{for all}~r\in[0,+\infty)
\]
and
\[
	\underset{r\rightarrow+\infty}{\lim}~(f,g)(r)=(-\sqrt{a-b},1).
\]
As a consequence, there exists $\overline{R}>R$ such that  for all $r\geq \overline{R}$ 
\[
	(f,g)(r)\in \mathcal{A}\cap\{(f,g):~|f|>f_0\}.
\]
Next, since $H$ is continuous on $\mathbb{R}^2,$ there exists $0<\delta'<1$ such that for any 
\[
	(\overline f,\overline g)\in B((f,g)(\overline{R}),\delta'),
\] 
we have 
\[
	|H(\overline f,\overline g)-H(0,1)|<\delta
\]
where $B((f,g)(\overline{R}),\delta')$ is the Euclidean ball of $\mathbb{R}^2$ centered in $(f,g)(\overline{R})$ of radius $\delta'$. Moreover, if we choose $\delta'$ sufficiently small, we can assume that $|u|>f_0$ for all
\[
	(u,v)\in B((f,g)(\overline{R}),\delta').
\]
Finally, by Lemma \ref{lemexistencesoleta},  there exists $\epsilon\in(0,1)$ such that for all $x\in(1-\epsilon,1)$, 
\[
	\|(\fxeta,\gxeta)-(f,g)\|_{\infty,[0,\overline{R}]}\leq \delta'
\]
where $\|.\|_{\infty,[0,\overline{R}]}$ is the uniform norm of $\mathcal{C}([0,\overline{R}],\mathbb{R}^2)$ so that $\gxeta$ is positive in $[0,\overline R].$ 
\item
\textbf{Control of the solutions of \eqref{pbeq_nonautonomereg} in} $\mathcal{A}\cap\{(f,g)\in \mathbb R^2:~|f|>f_0\}.$
We define
\[
	R_1:=\inf\{r>\bar R: \vb\fxeta(r)\vb \leq f_0\}.
\]
By Lemma \ref{lemposencorner}, we have that
\[
	H(\fxeta,\gxeta)(r)>0,
\]
for all $r\in[\overline{R},R_1)$.
Moreover by Lemma \ref{lem_EtudeReg}, since $(\fxeta,\gxeta)(r)\in \mathcal A \cap \{(f,g)\in \mathbb R^2:~|f|>f_0\}$,
\[
	\gxeta(r)\geq \sqrt{\frac{2}{a}(f_0^2+b)-1}
\]
for all $r\in[\overline{R},R_1)$. Hence, by Lemma \ref{lem_tourexpdecay}, we get that $R_1$ is well-defined, $\bar R<R_1<+\infty$ and $\gxeta$ does not change sign in $[0,R_1]$.
Furthermore, $\fxeta(R_1)=-f_0$ and, since $(\fxeta,\gxeta)$ is solution of the Hamiltonian system of equation \eqref{eq_autonome} on $[\overline{R},R_1]$, we obtain
$$
H(\fxeta,\gxeta)(R_1)=H(\fxeta,\gxeta)(\overline{R})\in (H(0,1)-\delta,H(0,1)).
$$
Hence, it remains to show that $\gxeta(R_1)=G_2(H(\fxeta,\gxeta)(R_1))$. 


Let
\[
	\tilde R:=\inf\{r>0:~|\fxeta(r)|>f_0\},
\]
then $\fxeta(\tilde R)=-f_0=\fxeta(R_1)$. Moreover, in $[\tilde R,R_1]$, $(\fxeta,\gxeta)$ is solution of the Hamiltonian system \eqref{eq_autonome}; this implies  
$$
H(\fxeta,\gxeta)(\tilde R)=H(\fxeta,\gxeta)(R_1).
$$
Finally, $\gxeta$ is decreasing  on $[\tilde R,R_1]$; in particular
\[
	\gxeta(R_1)<\gxeta(\tilde R).
\]
Hence, by Lemma \ref{lem_EtudeReg}, we deduce 
\begin{align*}
\gxeta(\tilde R) = G_1(H(\fxeta,\gxeta)(R_1)),\\
\gxeta(R_1) = G_2(H(\fxeta,\gxeta)(R_1)).
\end{align*}
Thanks to the remark we did in the preliminary results, we proved Step 2.
\end{enumerate}
}

\textbf{Step 3. Iteration: } Let $k\in \mathbb N$ and suppose that property $(H^k_1)$ is true. We show that this implies  property $(H^{k+1}_2)$.
The proof of this fact is similar to the one of Step 2 except that now $(f,g)$ is a solution of autonomous system \eqref{eq_autonome}. 
{ \begin{enumerate}
\item
\textbf{Preliminary results.} 
Let $\gamma$ and $R$ be positive constants given.
First of all, remark that with the notation of Lemma \ref{lem_EtudeReg}, $$X_2=(-f_0,-G_1(H(0,1))).$$  
 By continuity of $G_1,$ there exists $\delta>0$ such that $H(0,1)-\delta>E_c$ and 
 \[
 	\|(-f_0,-G_1(E))-X_2\|<\gamma
 \]
 for all $E\in (H(0,1)-\delta,H(0,1))$.
So, we have to prove that  there exists $\epsilon>0$ such that for any $x\in(1-\epsilon,1)$, there exists a positive constant $R'>R$ which satisfies
\[
H(0,1)-\delta<H(\fxeta,\gxeta)(R')< H(0,1),
\]
$\fxeta(R') = -f_0$, $\gxeta(R')=-G_1(H(\fxeta,\gxeta)(R'))$ and $\gxeta$ changes sign $k+1$ times in $[0,R']$.

\item
\textbf{Control of the solutions of \eqref{pbeq_nonautonomereg} when the solutions exit a neighborhood of $X_1$.}
We denote by $(f,g)$ the solution of  the following autonomous system
\[
\left\{
	\begin{array}{ll}
		\eqref{eq_autonome}\\
		(f,g)(0)=X_1.
	\end{array}
\right.
\]
It is clear that 
\begin{align*}
	&H(f,g)(r)=H(0,1),\\
	&-1<g(r)<1,~f(r)>-\sqrt{a-b}~\text{for all}~r\in[0,+\infty),
\end{align*}
and
\[
	\underset{r\rightarrow+\infty}{\lim}~(f,g)(r)=(-\sqrt{a-b},-1).
\]
Hence, there is $\overline{R}>0$ such that  for all $r\geq \overline{R}$ 
\[
	(f,g)(r)\in \mathcal{A}\cap\{(f,g):~|f|>f_0\}.
\]
Next, since $H$ is continuous on $\mathbb{R}^2$, there exists $\delta'>0$ such that for any 
\[
	(\tilde f,\tilde g)\in B((f,g)(\overline{R}),\delta'),
\] 
we have 
\[
	|H(\tilde f,\tilde g)-H(0,1)|<\delta.
\]
Moreover, if we choose $\delta'>0$ sufficiently small, we can assume that $|u|>f_0$ for all
\[
	(u,v)\in B((f,g)(\overline{R}),\delta').
\]
By Lemma \ref{lemconvconssol}, there exist $\tilde R>0$ and $\tilde \gamma>0$ such that if $\rho\geq \tilde R$ and 
\[
	\|(\tilde f,\tilde g)-X_1\|<\tilde \gamma
\]
then
{\[
	\|(f_{\tilde f,\tilde g,\eta},g_{\tilde f,\tilde g,\eta})(\cdot+\rho)-(f,g)\|_{\infty,[0,\overline{R}]}<\delta'.
\]}
Since by hypothesis, there are $\epsilon\in(0,1)$ and for any $x\in(1-\epsilon,1)$ a constant $R_1>\max(R,\tilde{R})$ such that 
\[
	(\fxeta,\gxeta)(R_1)\in B(X_1, \tilde\gamma)\cap \mathcal{A}
\]
and 
$\gxeta$ changes sign exactly $k$ times on $[0,R_1],$  we get
\[
	\|(\fxeta,\gxeta)(\cdot+R_1)-(f,g)\|_{\infty,[0,\overline{R}]}<\delta'.
\]
In particular,
\[
	|(\fxeta,\gxeta)(R_1+\overline{R})-(f,g)(\overline{R})|<\delta'
\]
and $\gxeta$ changes sign exactly $k+1$ times on $[0,R_1+\overline{R}].$
\item
\textbf{Control of the solutions of \eqref{pbeq_nonautonomereg} in} $\mathcal{A}\cap\{(f,g)\in \mathbb R^2:~|f|>f_0\}.$
 Let  
$$
R_2:=\inf\{r>\overline R+R_1: \vb\fxeta(r)\vb < f_0\}.
$$
With the same arguments used in the proof of property $(H^0_1)$, we prove that 
\[
	(\fxeta,\gxeta)(R_2)\in B(X_2, \gamma)\cap \mathcal{A}
\]
and 
$\gxeta$ changes sign exactly $k+1$ times on $[0,R_2].$ We proved that
\[
	(H^k_1)\Rightarrow (H^{k+1}_2).
\]
\end{enumerate}
}
Thanks to the symmetry of the system, we also get
\[
(H^k_3)\Rightarrow (H^{k+1}_4).
\]
The proof of the remaining implications
\begin{align*}
	(H^k_2)\Rightarrow (H^{k}_3)\\
	(H^k_4)\Rightarrow (H^{k}_1)
\end{align*}
uses the same ideas.

%
%
%
%
%
%
%
%
%
%
%
\subsection{Proof of the remaining points of Lemma \ref{lem_proprieteAkIk}}
In this part, we assume that $\eta\in(0,\sqrt{a-b}-\sqrt{\frac{a}{2}})$ is fixed.

{ First of all, we remark that point \eqref{lem_Akouvert} follows directly from Lemma \ref{lemexistencesoleta}.} 

For the remaining points, we need the following preliminary lemma.

\begin{lem}\label{lem_trapping1}
There exists $c_0>0$ universal constant such that if 
\begin{enumerate}[(i)]
	\item $H(\fxeta,\gxeta)(R)< \frac{c_0}{R}$
	\item $\gxeta(R)\in\left(0,\sqrt{\frac{2b}{a}}\right)~\text{and}~\fxeta(R)<0~\\\text{or}~\gxeta(R)\in\left({ -}\sqrt{\frac{2b}{a}},0\right)~\text{and}~\fxeta(R)>0$,
	\item $\gxeta$ changes sign $k$ times on $[0,R];$
\end{enumerate} 
for $x\in(0,1),$ $R>0,$ $\eta\in(0,\sqrt{a-b}-\sqrt{\frac{a}{2}})$ and $k\in \mathbb{N}$,
then $x$ belongs to $A_k\cup I_k\cup A_{k+1}$. 
\end{lem}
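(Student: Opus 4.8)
The goal is to show that under the three hypotheses of Lemma~\ref{lem_trapping1}, the solution $(\fxeta,\gxeta)$ behaves well enough that $x$ lands in one of the three consecutive sets $A_k$, $I_k$, or $A_{k+1}$. The plan is to analyze what the solution can do after time $R$, starting from a configuration where the energy is small and positive-or-negative (controlled by $c_0/R$), and where $(\gxeta,\fxeta)$ sits in a specific quadrant-like region near one of the stationary points $(0,\pm\sqrt{b/a})$ of the autonomous system. Since the trajectory cannot have arbitrarily many future sign changes from such a starting point, I expect it to either converge to $(0,0)$ (landing in $I_k$), converge with negative limiting energy while making no further sign change (landing in $A_k$), or make exactly one more sign change (landing in $A_{k+1}$).

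**Key steps in order.**

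First I would exploit hypothesis (i) to control the energy at later times. By Remark~\ref{rem_decroissanceenergie}, the energy $H(\fxeta,\gxeta)(r)$ is non-increasing for $x\in(0,1)$, so $\lim_{r\to+\infty}H(\fxeta,\gxeta)(r)$ exists and is at most $H(\fxeta,\gxeta)(R)<c_0/R$. The universal constant $c_0$ should be chosen small enough (via the appendix estimates on the Hamiltonian energy near the stationary points) to guarantee that from the prescribed starting position in hypothesis (ii), the solution is \emph{trapped} in the appropriate half-plane: i.e.\ if $\gxeta(R)\in(0,\sqrt{2b/a})$ with $\fxeta(R)<0$, the smallness of the energy forces $\gxeta$ to remain between $0$ and $1$ and prevents a large excursion that would cause more than one additional sign change. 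This is the heart of the argument, where I would compare the solution to the local phase portrait of the Hamiltonian system \eqref{eq_autonome} around $(0,\sqrt{b/a})$.

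Second, I would split into cases according to the limiting energy. If $\lim_{r\to+\infty}H(\fxeta,\gxeta)(r)\ge 0$, then Lemma~\ref{lem_tourexpdecay} applies directly: the solution decays exponentially to $(0,0)$, so $x\in I_k$ provided $\gxeta$ makes no further sign change after $R$ (which the trapping of Step~1 ensures, using hypothesis (iii) giving exactly $k$ changes on $[0,R]$). If instead $\lim_{r\to+\infty}H(\fxeta,\gxeta)(r)<0$, then the solution cannot tend to $(0,0)$, and by the qualitative analysis (as in the proof of Lemma~\ref{lem_tourexpdecay}, cf.\ Remark~\ref{remzerosf}) it must converge to $(0,\pm\sqrt{b/a})$; depending on whether it reaches this limit in the starting half-plane or after crossing $\{g=0\}$ exactly once, $\gxeta$ changes sign either $0$ or $1$ additional times, placing $x$ in $A_k$ or $A_{k+1}$ respectively.

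**The main obstacle.**

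The delicate point is Step~1: proving that the smallness condition $H(\fxeta,\gxeta)(R)<c_0/R$ together with the sign condition (ii) genuinely prevents two or more future sign changes, and pinning down the universal $c_0$ independent of $x$, $\eta$, $R$, and $k$. The difficulty is that the non-autonomous damping term $\tfrac{2\varphi_\eta}{r}\fxeta$ is active here (we are not near the corners where $\varphi_\eta=0$), so the energy genuinely dissipates and one cannot simply invoke conservation; instead I must bound the \emph{total} energy variation after $R$ by an integral that the factor $1/R$ controls, and combine this with the geometry of the level sets of $H$ near $(0,\sqrt{b/a})$ to confine the trajectory to one loop at most. I would lean on the appendix's properties of $H$ and on the monotonicity relations between $\fxeta$ and $\gxeta'$ to close this estimate.
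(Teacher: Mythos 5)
Your overall skeleton matches the paper's: both arguments reduce the lemma to showing that, after $R$, the solution can make at most one further sign change before its energy becomes negative, and both invoke Remark~\ref{rem_decroissanceenergie} and Lemma~\ref{lem_tourexpdecay} to sort the outcome into $A_k$, $I_k$ or $A_{k+1}$. You also correctly record one preliminary fact the paper uses: once $H\le 0$, since $\{g=0\}\cap H^{-1}((-\infty,0))=\emptyset$ and the energy keeps decreasing, $\gxeta$ can never cross zero again. But the step you yourself flag as ``the main obstacle'' --- the place where hypothesis (i) and the universal constant $c_0$ actually enter --- is exactly the content of the lemma, and your proposal does not supply it. Moreover, the route you sketch for it points the wrong way: the appendix lemmas concern the corners $(\pm\sqrt{a-b},\pm1)$ and the region $\vb f\vb\ge f_0$, not the centers $(0,\pm\sqrt{b/a})$, and a local phase-portrait comparison near $(0,\sqrt{b/a})$ cannot produce a constant with the $1/R$ scaling, since the trajectory in question has \emph{positive} energy while those equilibria have energy $-b^2/(4a)<0$, so it is not close to them.

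The paper's actual mechanism is a weighted-energy monotonicity that you would need to discover. Arguing by contradiction ($x\notin A_k\cup I_k\cup A_{k+1}$), one shows $\fxeta$ must vanish at some finite $\overline R>R$ with $H>0$ on $[R,\overline R]$, which forces $\gxeta(\overline R)>\sqrt{2b/a}$; hence $\gxeta$ must traverse the strip $\left[\tfrac13\sqrt{2b/a},\tfrac23\sqrt{2b/a}\right]$. In the region $\gxeta^2\le 2b/a$ one has $\varphi_\eta\equiv 1$ and the identity
\begin{equation*}
\frac{d}{dr}\left(r^4H(\fxeta,\gxeta)(r)\right)=r^3\gxeta^2(r)\left(a\gxeta^2(r)-2b\right)\le -c_1 r^3,\qquad c_1=\tfrac{20b^2}{81a},
\end{equation*}
on the middle third of the strip, while $\vb\gxeta'\vb\le\sqrt{a-b}$ gives the transit-time lower bound $R''-R'\ge\tfrac13\sqrt{2b/a}\,/\sqrt{a-b}$. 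Integrating yields a drop of $r^4H$ by at least $c_0R^3$ with $c_0=4c_1\sqrt{2b/(9a(a-b))}$, so $(R'')^4H(R'')\le R^4\left(H(R)-c_0/R\right)<0$, contradicting $H(R'')>0$. This explicit computation is what fixes $c_0$ and closes the argument; without it (or an equivalent quantitative dissipation estimate), your proof does not go through.
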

\begin{proof}
We define
\[
	c_0:=\sqrt{\frac{2b}{9a(a-b)}}\frac{80b^2}{81a}.
\]
We can assume thanks to the symmetries of the system that 
\[
	H(\fxeta,\gxeta)(R)< \frac{c_0}{R},~\gxeta(R)\in\left(-\sqrt{\frac{2b}{a}},0\right)~\text{ and }~\fxeta(R)>0
\] 
for some $x\in (0,1)$ and $R>0.$
First of all, we remark that if there exists $\tilde R$ such that $H(\fxeta,\gxeta)(\tilde R)\leq 0$, then 
\[
	\varphi_\eta(\fxeta,\gxeta)(\tilde R)=1
\]
by Lemma \ref{lemposencorner}. Hence, by Lemma \ref{lem_deriveenergiereg} and Remark \ref{rem_decroissanceenergie}, we deduce
\[
	H(\fxeta,\gxeta)(r)<0
\] 
for all $r>\tilde R$. Since
\[
	\{(f,g):~g=0\}\cap H^{-1}(-\infty,0)=\emptyset,
\]
$\gxeta$ does not change sign anymore on $(\tilde R,+\infty)$. 
Moreover, if $\gxeta$ has no changes of sign in $(R,+\infty)$ then either 
\[
	\underset{r\rightarrow+\infty}{\lim}H(\fxeta,\gxeta)(r)<0,
\]
and $x\in A_k$ or
\[
	\underset{r\rightarrow+\infty}{\lim}H(\fxeta,\gxeta)(r)\geq0,
\]
and $x\in I_k$ by Lemma \ref{lem_tourexpdecay}.

We assume, by contradiction, that $x\notin A_k\cup I_k\cup A_{k+1}$ then
$\gxeta$ changes sign at least once in $(R,+\infty).$ 
Next, we denote
\[
	\overline{R} := \inf\{r>R:~\fxeta(r)\leq 0\}\in(R,+\infty].
\]
Since $\gxeta$ is increasing for all $r\in[R,\overline{R}]$, $\gxeta$ changes sign at most once before $(\fxeta,\gxeta)$ exits $\{(f,g):~f>0\}$. 
Moreover, we claim that $R<\overline{R}<+\infty$. 
Indeed, if $\overline{R}=+\infty$, $\gxeta$ changes sign $k$ or $k+1$ times on $\mathbb{R}^+$. Then, we have either
\[
	\underset{r\rightarrow+\infty}{\lim}H(\fxeta,\gxeta)(r)<0,
\]
and $x\in A_{k}\cup A_{k+1}$ or 
\[
	\underset{r\rightarrow+\infty}{\lim}H(\fxeta,\gxeta)(r)\geq0,
\]	
and $x\in I_k\cup I_{k+1}$ by Lemma \ref{lem_tourexpdecay}. Moreover, if $x\in I_{k+1}$, Lemma \ref{lem_tourexpdecay} ensures that $\gxeta$ decays exponentially to $0$ this contradicts the fact that $\gxeta$ is positive and increasing between 
\[
	\inf\{r\geq R:~\gxeta(r)\geq 0\}\in (R,\overline{R})
\]
and $\overline{R}.$ Nevertheless, we assumed that $x\notin A_k\cup I_k\cup A_{k+1}$ hence $\overline{R}<+\infty$.
As a consequence, we get
\[
	\fxeta(\overline{R})=0
\]
and, since $x\notin A_{k+1}$,  
\[
	H(\fxeta,\gxeta)(r)>0.
\]
for all $r\leq \overline{R}$.
Moreover, we have
\[
	\gxeta(\overline{R})>\sqrt{\frac{2b}{a}}
\]
since $H(0,x)\le 0$ for all $x\in[-\sqrt{\frac{2b}{a}},\sqrt{\frac{2b}{a}}]$.

Next, we denote 
\[
	R' :=\sup\{r\in(R,\overline{R}),~\gxeta(r)\leq \frac{1}{3}\sqrt\frac{2b}{a}\}
\]
and
\[
	R'':=\inf\{r>R:~\gxeta(r)\geq\frac{2}{3}\sqrt\frac{2b}{a}\},
\]
and we remark that this quantities are well-defined. For all $r\in(R,R''), $ we get
\begin{align*}
	\frac{d}{dr}H(\fxeta,\gxeta)(r)&=-\frac{2}{r}\fxeta^2(1-\gxeta^2)\\
	&=-\frac{4}{r}\left(H(\fxeta,\gxeta)(r)-\frac{a}{4}\gxeta(r)^4+\frac{b}{2}\gxeta^2(r)\right),
\end{align*}
and
\begin{align}\label{ineq_energietrapping}
	\frac{d}{dr}\left(r^4H(\fxeta,\gxeta)(r)\right) = r^3\gxeta^2(r)\left(a\gxeta^2(r)-2b\right)\leq 0
\end{align}
since $\gxeta^2(r)<\frac{2b}{a}$ for all $r\in[R,R''].$ Moreover,  we have 
\[
	\gxeta(r)\in\left[\frac{1}{3}\sqrt\frac{2b}{a},\frac{2}{3}\sqrt\frac{2b}{a}\right]
\]
for all $r\in [R',R'']$ and 
\begin{align}\label{ineq:gxeta}
	\frac{1}{3}\sqrt{\frac{2b}{a}}&=\gxeta(R'')-\gxeta(R') =\int^{R''}_{R'}~\fxeta(s)(1-\gxeta^2(s))ds\\
	\nonumber&\leq\sqrt{a-b}\,(R''-R').
\end{align}
 Integrating  inequality \eqref{ineq_energietrapping}, we have, thanks to inequality \eqref{ineq:gxeta},
\begin{align}\label{ineq:hxeta}
(R'')^4&H(\fxeta,\gxeta)(R'')-(R')^4H(\fxeta,\gxeta)(R')\leq -c_1\left((R'')^4 -(R')^4\right)\\
	\nonumber&\leq-c_1(R''-R')(R''^3+R''^2R'+R''R'^2+R'^3)\\
	\nonumber&\leq -4c_1\left(\sqrt{\frac{2b}{9a(a-b)}}\right)R^3= -c_0R^3
\end{align}
for $c_1=\frac{20b^2}{81a}$, since 
\[
	c_0=4c_1\sqrt{\frac{2b}{9a(a-b)}}=\sqrt{\frac{2b}{9a(a-b)}}\frac{80b^2}{81a}.
\]
Then, we obtain by inequalities \eqref{ineq_energietrapping} and \eqref{ineq:hxeta}
\begin{align*}
	(R'')^4H(\fxeta,\gxeta)(R'')&\leq -c_0R^3+(R')^4H(\fxeta,\gxeta)(R')\\
	&\leq R^4\left(-\frac{c_0}{R}+H(\fxeta,\gxeta)(R)\right)\\
	&<0.
\end{align*}
This is impossible since
\[
	H(\fxeta,\gxeta)(R'')>0.	
\]
\end{proof}
\subsubsection{ Proof of point (\ref{lem_trapping}) of Lemma \ref{lem_proprieteAkIk} } 

\begin{lem}\label{lem_trappingcontinuity}
Let $k\in \mathbb{N}$ and $ \eta\in(0,\sqrt{a-b}-\sqrt{\frac{a}{2}})$. If $x\in I_k$ then there is $\epsilon>0$ such that 
\[
	(x-\epsilon,x+\epsilon)\subset A_k\cup I_k\cup A_{k+1}.
\]
\end{lem}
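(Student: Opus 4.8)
The goal is to show that $I_k$ is contained in the interior of $A_k\cup I_k\cup A_{k+1}$, i.e., that a solution whose data lies in $I_k$ can only be perturbed into a solution counted by $A_k$, $I_k$, or $A_{k+1}$. The natural strategy is to exploit Lemma~\ref{lem_trapping1}: if I can produce, for the solution $(\fxeta,\gxeta)$ with $x\in I_k$, a radius $R$ at which the three hypotheses of Lemma~\ref{lem_trapping1} hold with strict inequalities, then by continuity of the flow (Lemma~\ref{lemexistencesoleta}) the same hypotheses persist for all $\tilde x$ in a neighborhood of $x$, and Lemma~\ref{lem_trapping1} applied to each such $\tilde x$ immediately gives the desired inclusion.

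\medskip

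Here is how I would carry it out. Fix $x\in I_k$. By definition $\gxeta$ changes sign exactly $k$ times and $(\fxeta,\gxeta)(r)\to(0,0)$; moreover $H(\fxeta,\gxeta)(r)\to 0$ from above along the trajectory (the limiting energy is $\ge 0$ since the solution is localized, and by Remark~\ref{rem_decroissanceenergie} the energy is non-increasing, so $H(\fxeta,\gxeta)(r)>0$ for every finite $r$ while tending to $0$). By the exponential-decay analysis in the proof of Lemma~\ref{lem_tourexpdecay}, after the last sign change there is $\bar R$ beyond which, say, $\gxeta>0$ and $\fxeta<0$ with both decaying exponentially, and in particular $\gxeta^2<\tfrac{2b}{a}$ eventually. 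Now I choose $R>\bar R$ large enough that three things hold simultaneously: (a) $0<\gxeta(R)<\sqrt{\tfrac{2b}{a}}$ with $\fxeta(R)<0$, which is hypothesis (ii); (b) $\gxeta$ has changed sign exactly $k$ times on $[0,R]$, which is hypothesis (iii); and (c) $H(\fxeta,\gxeta)(R)<\tfrac{c_0}{R}$, which is hypothesis (i). The crux is (c): because $H(\fxeta,\gxeta)(R)$ decays exponentially in $R$ while $\tfrac{c_0}{R}$ decays only polynomially, for all $R$ large enough the strict inequality $H(\fxeta,\gxeta)(R)<\tfrac{c_0}{R}$ is automatic. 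So such an $R$ exists.

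\medskip

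With $R$ fixed, all three conditions of Lemma~\ref{lem_trapping1} hold at $r=R$ with strict inequalities and with the open conditions $\fxeta(R)<0$, $0<\gxeta(R)<\sqrt{\tfrac{2b}{a}}$. By the continuous dependence on the initial datum (Lemma~\ref{lemexistencesoleta}), the map $\tilde x\mapsto(f_{\tilde x,\eta},g_{\tilde x,\eta})(R)$ is continuous, and since $H$ is continuous and the number of sign changes of $g_{\tilde x,\eta}$ on the compact interval $[0,R]$ is locally constant (each sign change at some $r_0\le R$ is transversal, $g_{\tilde x,\eta}'(r_0)\ne0$, by Remark~\ref{remequivsignvanish}), there is $\epsilon>0$ such that for every $\tilde x\in(x-\epsilon,x+\epsilon)$ the solution $(f_{\tilde x,\eta},g_{\tilde x,\eta})$ still satisfies at $r=R$: $H<\tfrac{c_0}{R}$, $0<g_{\tilde x,\eta}(R)<\sqrt{\tfrac{2b}{a}}$, $f_{\tilde x,\eta}(R)<0$, and $g_{\tilde x,\eta}$ has changed sign exactly $k$ times on $[0,R]$. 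Applying Lemma~\ref{lem_trapping1} to each such $\tilde x$ yields $\tilde x\in A_k\cup I_k\cup A_{k+1}$, which is exactly the claim.

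\medskip

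I expect the main obstacle to be the bookkeeping that guarantees the number of sign changes stays exactly $k$ under perturbation. One must ensure both that no existing sign change is lost (this follows from transversality of each crossing on the compact set $[0,R]$) and that no new sign change is created on $[0,R]$ by the perturbation; the latter is handled by choosing $R$ just past the last crossing and keeping $\gxeta$ bounded away from zero near the right endpoint, so that uniform closeness of $g_{\tilde x,\eta}$ to $\gxeta$ on $[0,R]$ forbids a spurious crossing near $R$. The energy condition (c) is the other point requiring care, but it is painless precisely because of the exponential-versus-polynomial decay comparison afforded by Lemma~\ref{lem_tourexpdecay}.
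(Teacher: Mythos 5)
Your proposal is correct and follows exactly the paper's argument: use Lemma \ref{lem_tourexpdecay} to find a radius $R$ at which the hypotheses of Lemma \ref{lem_trapping1} hold (the energy condition coming from exponential decay of $H$ beating the $c_0/R$ threshold), then propagate these open conditions to a neighborhood of $x$ via the continuous dependence of Lemma \ref{lemexistencesoleta} and conclude with Lemma \ref{lem_trapping1}. The paper states this more tersely ("we easily get that there is $R$..."); your write-up simply fills in the same details.
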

\begin{proof}
By Lemma \ref{lem_tourexpdecay}, there exists $C,K>0$ such that  
\[
	(|\fxeta|+|\gxeta|)(r)\leq C\exp(-Kr)
\]
for all $r$ and $H(\fxeta,\gxeta)$ converges exponentially to $0$. We easily get that there is $R$ such that the assumptions of Lemma \ref{lem_trapping1} are fulfilled for $x$ at $R$. Then, by Lemma \ref{lemexistencesoleta}, there is $\epsilon>0$ such that for all $y\in (x-\epsilon,x+\epsilon)$, $g_{y,\eta}$ changes sign $k$ times on $[0,R],$
\[
	H(f_{y,\eta},g_{y,\eta})(R)<\frac{c_0}{R}
\]
and
\[
	f_{y,\eta}(R)<0,~g_{y,\eta}(R)\in(0,\sqrt{\frac{2b}{a}})~\text{or}~f_{y,\eta}(R)>0,~g_{y,\eta}(R)\in(-\sqrt{\frac{2b}{a}},0).
\]
Thus, by Lemma \ref{lem_trapping1}, we have that 
\[
	(x-\epsilon,x+\epsilon)\subset A_k\cup I_k\cup A_{k+1}.
\]
\end{proof}
\subsubsection{ Proof of point (\ref{lem_supAk}) of Lemma \ref{lem_proprieteAkIk} }

\begin{lem}\label{lem_supAk1}
Let $k\in \mathbb{N}$ and $\eta\in(0,\sqrt{a-b}-\sqrt{\frac{a}{2}}).$ If $A_k$ is non-empty, then 
\[
	\sup A_k\in I_k\cup I_{k-1}.
\]
\end{lem}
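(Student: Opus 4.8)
The plan is to set $\bar x := \sup A_k$ and analyze the limiting solution $(f_{\bar x,\eta},g_{\bar x,\eta})$. First I would note that point \eqref{lem_AkIkborne} gives $A_k\subset(0,1-\epsilon)$, so $\bar x\in(0,1)$ and, by Lemma \ref{lempropertiessol}, the solution is global and stays in $\mathring{\mathcal A}$; in particular the limit $H_\infty:=\lim_{r\to+\infty}H(f_{\bar x,\eta},g_{\bar x,\eta})(r)$ exists by Remark \ref{rem_decroissanceenergie}. Since $A_k$ is open (point \eqref{lem_Akouvert}), we have $\bar x\notin A_k$. Let $m$ denote the number of sign changes of $g_{\bar x,\eta}$ on $\mathbb R^+$.

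The first key step is to show $m\le k$ (so, in particular, that $m$ is finite). By Remark \ref{remequivsignvanish} every sign change of $g_{\bar x,\eta}$ is transverse, i.e. it occurs at an isolated zero where $g_{\bar x,\eta}'\ne0$. Choosing a sequence $x_n\in A_k$ with $x_n\to\bar x$ and a radius $R$ lying beyond the first $k+1$ sign changes of $g_{\bar x,\eta}$ (were there that many), the uniform continuity of the flow on $[0,R]$ from Lemma \ref{lemexistencesoleta} forces each $g_{x_n,\eta}$ to have at least as many sign changes on $[0,R]$ as $g_{\bar x,\eta}$ does there; since each $g_{x_n,\eta}$ has exactly $k$ sign changes in total, this yields $m\le k$. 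Sign changes may be lost in the limit by escaping to infinity, but none can be created.

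Next I would establish $H_\infty\ge0$. If instead $H_\infty<0$, then by definition $\bar x\in A_m$. When $m=k$ this contradicts $\bar x\notin A_k$; when $m<k$, openness of $A_m$ provides a whole neighbourhood of $\bar x$ contained in $A_m$, which is disjoint from $A_k$, contradicting $\bar x=\sup A_k$. Hence $H_\infty\ge0$, and since $g_{\bar x,\eta}$ changes sign only finitely many times, Lemma \ref{lem_tourexpdecay} gives $(f_{\bar x,\eta},g_{\bar x,\eta})\to(0,0)$, that is $\bar x\in I_m$.

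Finally I would rule out $m\le k-2$. Since $\bar x\in I_m$, Lemma \ref{lem_trappingcontinuity} (point \eqref{lem_trapping}) yields $\epsilon>0$ with $(\bar x-\epsilon,\bar x+\epsilon)\subset A_m\cup I_m\cup A_{m+1}$, all of whose elements have at most $m+1$ sign changes. But $\bar x=\sup A_k$ forces $A_k$ to meet $(\bar x-\epsilon,\bar x)$, whose elements have $k\ge m+2>m+1$ sign changes, a contradiction. Therefore $m\in\{k-1,k\}$ and $\bar x\in I_{k-1}\cup I_k$, as claimed. The main obstacle is the first step, namely proving that no sign changes are created in the limit, which relies essentially on the transversality of the zeros of $g$ recorded in Remark \ref{remequivsignvanish}; the remaining steps are a clean combination of the openness, trapping, and exponential-decay properties already established.
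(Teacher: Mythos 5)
Your proposal is correct and follows essentially the same route as the paper's proof: the transversality of zeros (Remark \ref{remequivsignvanish}) plus continuity of the flow (Lemma \ref{lemexistencesoleta}) to prevent extra sign changes from appearing along a sequence in $A_k$, Lemma \ref{lem_tourexpdecay} to place $\sup A_k$ in some $I_m$, and point \eqref{lem_trapping} to force $m\in\{k-1,k\}$. The only difference is presentational: the paper argues by contradiction that $g_{x,\eta}$ would otherwise change sign infinitely often, while you directly bound $m\le k$ first, but the ingredients and logic are identical.
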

\begin{proof}
Thanks to points \eqref{lem_Akouvert}  and \eqref{lem_AkIkborne} of Lemma \ref{lem_proprieteAkIk}, 
\[
	x:=\sup A_k\in (0,1)\backslash \underset{n\in \mathbb{N}}{\cup}~A_n.
\]
Let $\{x_i\}\subset A_k$ be such that 
\[
 	\underset{i\rightarrow+\infty}{\lim}~x_i=x.
\]
Suppose by contradiction that $x\notin  \underset{n\in \mathbb{N}}{\cup}~(I_n\cup A_n),$ then in particular
\[
	H(\fxeta,\gxeta)(r)>0
\]
for all $r>0.$ Moreover, Lemma \ref{lem_tourexpdecay} ensures that $\gxeta$ changes sign an infinite number of times. 

Let $R>0$ be such that $\gxeta$ changes sign more than $k+1$ times in $[0,R]$ at $0<r_1<\dots<r_{k+1}.$ Hence, there is $\epsilon>0$ such that for all $j\in\{1,\dots,k+1\}$, all $r\in(0,\epsilon)$ we have
\[
	g_{x,\eta}(r_j-r)g_{x,\eta}(r_j+r)\leq0
\]
and
\[
	g_{x,\eta}(r_j-\epsilon)g_{x,\eta}(r_j+\epsilon)<0.
\]
Then, by Lemma \ref{lemexistencesoleta}, there is $M>0$ such that if $i\geq M$ then
\[
	g_{x_i,\eta}(r_j-\epsilon)g_{x_i,\eta}(r_j+\epsilon)<g_{x,\eta}(r_j-\epsilon)g_{x,\eta}(r_j+\epsilon)/2<0
\] for all $j\in\{1,\dots,k+1\}.$ Thus, for all $i\geq M$ and all $j\in\{1,\dots,k+1\}$,  there is a real number $r_j^i\in(r_j-\epsilon,r_j+\epsilon)$ such that $g_{x_{ i},\eta}(r^i_j)=0.$ Then, we get  $g_{x_{ i},\eta}'(r^i_j)\ne0$ so that $g_{x_j,\eta}$ changes sign more that $k+1$ times at the points $r^i_j.$ This is impossible because $x_{ i}\in A_k.$ Hence, we have that
\[
	\sup A_k\in I_m
\]
for some $m\in \mathbb{N}$ and by point \eqref{lem_trapping}, we get the result.
\end{proof}
\subsubsection{ Proof of point (\ref{lem_supIk}) of Lemma \ref{lem_proprieteAkIk} }
\begin{lem}
Let $k\in \mathbb{N}$ and $\eta\in(0,\sqrt{a-b}-\sqrt{\frac{a}{2}})$. If $I_k$ is non-empty, then 
\[
	\sup I_k\in I_k.
\]
\end{lem}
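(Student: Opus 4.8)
The plan is to set $x:=\sup I_k$ and to show directly that $x\in I_k$ by identifying the asymptotic behaviour of $(\fxeta,\gxeta)$ together with the exact number of sign changes of $\gxeta$. First I would record the basic facts about $x$: since $I_k$ is nonempty and, by point \eqref{lem_AkIkborne} of Lemma \ref{lem_proprieteAkIk}, $I_k\subset(0,1-\epsilon)$, we have $x\in(0,1)$, so by Lemma \ref{lempropertiessol} the solution $(\fxeta,\gxeta)$ is global and stays in $\mathring{\mathcal A}$, and by Remark \ref{rem_decroissanceenergie} the energy $H(\fxeta,\gxeta)$ is non-increasing and has a limit. I would also note two structural facts used repeatedly: every zero of $\gxeta$ is a transversal sign change (Remark \ref{remequivsignvanish}), and once $H(\fxeta,\gxeta)(r)<0$ the level set $\{g=0\}$ is never met again, so no further sign change can occur. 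Consequently, either $\gxeta$ changes sign a finite number $m$ of times — in which case $x\in A_m$ if $\lim_{r\to+\infty}H<0$, or $x\in I_m$ by Lemma \ref{lem_tourexpdecay} if $\lim_{r\to+\infty}H\ge 0$ — or $\gxeta$ changes sign infinitely often.

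Next I would fix a sequence $x_i\in I_k$ with $x_i\to x$, which exists because $x=\sup I_k$, and use the continuity of the flow (Lemma \ref{lemexistencesoleta}) as the main continuity engine. To rule out infinitely many sign changes, suppose $\gxeta$ had more than $k$ transversal sign changes on some bounded interval $[0,R]$; then, exactly as in the proof of Lemma \ref{lem_supAk1}, the uniform convergence of $g_{x_i,\eta}$ to $\gxeta$ on $[0,R]$ would force $g_{x_i,\eta}$ to have more than $k$ sign changes for $i$ large, contradicting $x_i\in I_k$. Hence $\gxeta$ changes sign exactly $m<+\infty$ times, and $x\in A_m\cup I_m$.

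It remains to eliminate the case $x\in A_m$ and to pin down $m=k$. Since $A_m$ is open by point \eqref{lem_Akouvert}, if $x\in A_m$ then a whole neighbourhood of $x$ would lie in $A_m$ and hence contain points $x_i\in I_k$, contradicting $A_m\cap I_k=\emptyset$; thus $x\in I_m$. The same persistence argument applied to the $m$ transversal sign changes of $\gxeta$ shows $g_{x_i,\eta}$ has at least $m$ sign changes, so $m\le k$. If $m<k$, I would invoke point \eqref{lem_trapping} to get $\epsilon>0$ with $(x-\epsilon,x+\epsilon)\subset A_m\cup I_m\cup A_{m+1}$; but this neighbourhood contains points of $I_k$ (because $x=\sup I_k$), while $I_k$ is disjoint from $A_m$, $I_m$ and $A_{m+1}$ whenever $m<k$ — a contradiction. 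Therefore $m=k$ and $x=\sup I_k\in I_k$.

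I expect the genuine obstacle to be the lower bound $m\ge k$, that is, preventing sign changes of $g_{x_i,\eta}$ from escaping to infinity in the limit and thereby being lost. A direct limiting argument does not obviously control this, which is precisely why the proof is organized to obtain $m\le k$ from persistence and $m\ge k$ from the trapping statement point \eqref{lem_trapping}, rather than from a single compactness argument.
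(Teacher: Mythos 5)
Your argument is correct and is essentially the paper's own proof, which (tersely) says to repeat the argument of Lemma \ref{lem_supAk1} to obtain $\sup I_k\in I_j$ for some $j$, and then to invoke point \eqref{lem_trapping} of Lemma \ref{lem_proprieteAkIk} to force $j=k$. The only difference is cosmetic: your separate persistence step giving $m\le k$ is redundant, since the trapping neighbourhood $(x-\epsilon,x+\epsilon)\subset A_m\cup I_m\cup A_{m+1}$ must meet $I_k$ and is disjoint from $I_k$ for every $m\ne k$, which already pins down $m=k$.
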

\begin{proof}
The proof follows the same ideas as the one of Lemma \ref{lem_supAk1}. We get that
\[
	\sup I_k \in I_j
\]
for some $j\in \mathbb{N}$ and by point \eqref{lem_trapping}, we get the result.
\end{proof}
\section{Proof of theorem \ref{thm_main}}\label{secproofth}
We give now the proof of Theorem \ref{thm_main} by taking the limit when $\eta$ tends to $0$. 
\begin{proof}
Let us fix $k\in \mathbb{N}.$ For all $\eta\in(0,\sqrt{a-b}-\sqrt{\frac{a}{2}}),$ there is  $x_\eta\in (0,1)$ such that  
\[
	\underset{r\rightarrow+\infty}{\lim}~ (f_{x_\eta,\eta},g_{x_\eta,\eta})(r)=(0,0),~g_{x_\eta,\eta} ~\text{has $k$ changes of sign on $(0,+\infty)$}
\]
by Proposition \ref{prop_shooting}.
We also know that
\[
	H(f_{x_\eta,\eta},g_{x_\eta,\eta})(r)> 0
\]
for all $r\in\mathbb{R}^+$. Since for all $x\in(0,\sqrt{\frac{2b}{a}})$ 
	\[
		H(0,x)<0,
	\]
we deduce that
	\[
		\{x_\eta\}_\eta\subset\left[\sqrt{\frac{2b}{a}},1\right).
	\]
	Thus, there is a subsequence $\{\eta_n\}_n$ such that
	\[\left\{\begin{array}{l}
		\underset{n\rightarrow +\infty}{\lim}~\eta_n= 0\\
		\underset{n\rightarrow +\infty}{\lim}~x_{\eta_n}=x_0\in \left[\sqrt{\frac{2b}{a}},1\right].
	\end{array}\right.\]
	By Lemma \ref{lemexistencesoleta}, we get  that for all $R>0, \epsilon>0,$ there exists $N>0$ such that if $n\geq N$ then
	\[
		\|(f_{x_0,0},g_{x_0,0})-(f_{x_{\eta_n},\eta_n},g_{x_{\eta_n},\eta_n})\|_{\infty,[0,R]}\leq \epsilon
	\]
	where $\|.\|_{\infty,[0,R]}$ is the uniform norm on the set $\mathcal{C}([0,R],\mathbb{R}^2).$
	Thus, $(f_{x_0,0},g_{x_0,0})$ is a solution of equations \eqref{eq_nonautonome} such that
	\[\left\{\begin{array}{l}
		(f_{x_0,0},g_{x_0,0})(0)=(0,x_0)\\
		H(f_{x_0,0},g_{x_0,0})(r)\geq 0,~\text{for all}~r\in \mathbb{R}^+
	\end{array}\right.\]
	and Remark \ref{rem_decroissanceenergie} ensures that $x_0\in\left(\sqrt{\frac{2b}{a}},1\right]$. 
	
	To conclude, we have to show now that $x_0\in\left(\sqrt{\frac{2b}{a}},1\right)$ since $(f_{1,0},g_{1,0})$ is not a localized solution of \eqref{eq_nonautonome}. 
	
	Assume, by contradiction, that $x_0=1$; then
	\[
		H(f_{x_0,0},g_{x_0,0})(r)=H(0,1)
	\]
	for all $r\geq0.$ We denote $H_0:= H(0,1)/2>0$ and we define
	\[
		R_n := \inf\{r>0:~H(f_{x_{\eta_n},\eta_n},g_{x_{\eta_n},\eta_n})(r)\leq H_0\}\in(0,+\infty).
	\]
	We have
	\[
		H(f_{x_{\eta_n},\eta_n},g_{x_{\eta_n},\eta_n})(R_n)=H_0.
	\]
	We claim that $\{R_n\}_n$ tends to $+\infty.$ Indeed, if there is a subsequence also denoted $\{R_n\}_n$ and a real number $R>0$ such that $R_n\in[0,R]$ then, we get the following contradiction
	\[
		H(0,1) = \underset{n\rightarrow +\infty}{\lim}~H(f_{x_{\eta_n},\eta_n},g_{x_{\eta_n},\eta_n})(R)\leq H_0.
	\]
	 Next, since $\mathcal{A}$ is a compact set,  there is $(f_0,g_0)\in \mathcal{A}$ such that up to extraction,
	\[
		\underset{n\rightarrow +\infty}{\lim}~(f_{x_{\eta_n},\eta_n},g_{x_{\eta_n},\eta_n})(R_n)=(f_0,g_0)
	\]
	and
	\[
		H(f_0,g_0)=H_0.
	\]
	We denote by $T>0$ the period of the solution of the Hamiltonian system of equations \eqref{eq_autonome} of energy equal to $H_0$. Let us consider now the following Cauchy problem
	\begin{equation}\label{pbeq_nonautonomereginf}
	\left\{
		\begin{array}{rl}
			f'+\frac{2\varphi_\eta(f,g)}{r+\rho}f	&=g(f^2-ag^2+b),\\
			g'			&=f(1-g^2),\\
			(f,g)(0)&=X. 
		\end{array}
	\right.
	\end{equation}
	Its solutions depend continuously on the parameters $(X,\rho,\eta)$ on every interval $[0,R] $ just as in Lemma \ref{lemexistencesoleta}. So, $(f_{x_{\eta_n},\eta_n},g_{x_{\eta_n},\eta_n})(~.+R_n)$ tends uniformly on $[0,\frac{(k+2)T}{2}]$ to a solution $(f,g)$ of  
		\begin{equation*}
	\left\{
		\begin{array}{rl}
			f'	&=g(f^2-ag^2+b),\\
			g'			&=f(1-g^2),\\
			(f,g)(0)&=(f_0,g_0). 
		\end{array}
	\right.
	\end{equation*}
	Moreover, $(f,g)$ is periodic of period $T$ which implies that $g$ has at least $k+1$ changes of sign on $\left[0,\frac{(k+2)T}{2}\right]$ \emph{i.e.} there is $\epsilon>0$, and $0<r_1<\dots<r_{k+1}<\frac{(k+2)T}{2}$ such that
	\[
		g(r_i+r)g(r_i-r)\leq0
	\]
	for all $r\in(0,\epsilon)$
	and
	\[
		g(r_i+\epsilon)g(r_i-\epsilon)<0
	\]	
	 for all $i\in\{1,\dots k+1\}$. 
	 
	 Hence, there is $N>0$ such that for all $n>N$ and all  $i\in\{1,\dots k+1\},$ we get
	\[
		g_{x_{\eta_n},\eta_n}(R_n+r_i+\epsilon)g_{x_{\eta_n},\eta_n}(R_n+r_i-\epsilon)<g(r_i+\epsilon)g(r_i-\epsilon)/2<0,
	\]
	which implies that there is $r^i_n\in (r_i-\epsilon,r_i+\epsilon)$ such that
	\[
		g_{x_{\eta_n},\eta_n}(R_n+r^i_n)=0.
	\]
	As a conclusion, since $(f_{x_{\eta_n},\eta_n},g_{x_{\eta_n},\eta_n})$ is a solution of \eqref{eq_nonautonomereg}, $g_{x_{\eta_n},\eta_n}'((R_n+r^i_n))\ne0$ and $g_{x_{\eta_n},\eta_n}$  has at least $k+1$ changes of sign at the points $r^i_n.$ This is impossible because $g_{x_{\eta_n},\eta_n}$  has exactly $k$ changes of sign. As a consequence $x_0<1$. 
	
	Moreover, with the same arguments used above, we prove that $g_{x_{0},0}$ changes sign a finite number of times $k_0$. Hence, Lemma \ref{lem_tourexpdecay} ensures that $(f_{x_0,0},g_{x_0,0})$ converge exponentially to $(0,0)$.
 
 As a consequence, there is $R>0$ such that $g_{x_0,0}$ changes sign $k_0$ times in $[0,R]$, 
\[
	H(f_{x_0,0},g_{x_0,0})(R)<\frac{c_0}{R}
\]
and
\[
	f_{x_0,0}(R)<0,~g_{x_0,0}(R)\in(0,\sqrt{\frac{2b}{a}})~\text{or}~f_{x_0,0}(R)>0,~g_{x_0,0}(R)\in(-\sqrt{\frac{2b}{a}},0).
\]
Hence, there is $\epsilon\in(0,1)$ such that for all $x\in(x_0-\epsilon,x_0+\epsilon)$ and all $\eta\in(0,\epsilon),$ $g_{x,\eta}$ changes sign $k_0$ of times in $[0,R]$,
\[
	H(f_{x,\eta},g_{x,\eta})(R)<\frac{c_0}{R}
\]
and
\[
	f_{x,\eta}(R)<0,~g_{x,\eta}(R)\in(0,\sqrt{\frac{2b}{a}})~\text{or}~f_{x,\eta}(R)>0,~g_{x,\eta}(R)\in(-\sqrt{\frac{2b}{a}},0).
\]
By applying Lemma \ref{lem_trapping1}, we get, for $\eta\in (0,\epsilon)$ fixed, 
\[
	(x_0-\epsilon,x_0+\epsilon)\subset A_{k_0}\cup I_{k_0}\cup A_{k_0+1}.
\]
Remark that the set $A_k$ and $I_k$ depends on $\eta$, hence, it is important to fix $\eta$ before writing such a property. By definition of $\{x_{\eta_n}\}_n$, there is $N\in\mathbb{N}$ such that for all $n\geq N$
\[
	x_{\eta_n}\in(x_0-\epsilon,x_0+\epsilon),~\eta_n\in(0,\epsilon).
\]
As a conclusion, for $n\geq N$ fixed $x_{\eta_n}\in I_{k_0}$. This ensures that $k=k_0$.

Finally, by Remark \ref{remequivsignvanish}, if $g_{x_0,0}$ changes sign $k$ times on $(0,+\infty)$ then $g_{x_0,0}$ has $k$ zeros on $(0,+\infty)$. To conclude, it remains to prove that $f_{x_0,0}$ has $k$ zeros on $(0,+\infty)$ as well.

Let $\{r_1,\ldots,r_k\}$ be the zeros of $g_{x_0,0}$ on $(0,+\infty)$. First of all, we prove by induction that, for all $i=1\ldots,k$, $f_{x_0,0}$ has $i-1$ zeros on $(0,r_i)$. This property is true for $i=1$. Indeed, suppose by contradiction that $f_{x_0,0}(\bar r)=0$ for some $\bar r\in(0,r_1)$. Hence, using the first equation of \eqref{eq_nonautonome}, we get $H(f_{x_0,0}, g_{x_0,0})(\bar r)<0$. That is impossible.  
Next, if the property holds true for $i-1$, then it holds true for $i$. Indeed, suppose that $f_{x_0,0}$ has $i-2$ zeros on $(0,r_{i-1})$; we prove that $f_{x_0,0}$ has $1$ zero on $(r_{i-1},r_i)$. By contradiction, if $f_{x_0,0}$ does not change sing on $(r_{i-1},r_i)$, the second equation of  \eqref{eq_nonautonome} implies that $g_{x_0,0}$ is monotone on $(r_{i-1},r_i)$. This contradicts the fact that $g_{x_0,0}$ changes sign at $r_{i-1}$ and $r_i$. Hence, $f_{x_0,0}$ has at least $1$ zero at $\bar r\in (r_{i-1},r_i)$. Now suppose that there exists $\tilde r \in (\bar r ,r_i)$ such that $f_{x_0,0}(\tilde r)=0$. One of the following situations arise: $g(r)<0$ and $f(r)>0$ on $(\bar r ,\tilde r)$ or  $g(r)>0$ and $f(r)<0$ on $(\bar r , \tilde r)$. Using again the first equation of \eqref{eq_nonautonome}, in both cases we get $g_{x_0,0}^2(\tilde r )\le \frac{b}{a}$ which implies $H(f_{x_0,0}, g_{x_0,0})(\tilde r)<0$, a contradiction. Hence, for all $i=1\ldots,k$, $f_{x_0,0}$ has $i-1$ zeros on $(0,r_i)$.
Finally, using the same arguments, we show that $f_{x_0,0}$ has $1$ zero on $(r_k,+\infty)$ and we conclude that $f_{x_0,0}$ has $k$ zeros on $(0,+\infty)$.
		
\end{proof}

\appendix
\section{Geometric properties of $H$.}
%
%
%
%
%
We remind that  $\mathcal{A}=\{(f_0,g_0)\in\mathbb{R}^2|\, 2f_0^2-ag_0^2-(a-2b)\le 0,\; g_0^2\le 1\}$ is the set of admissible points. Let us remark that $(f,g)\in\mathbb{R}^2$ satisfies $H(f,g)=H(0,1)=\frac{1}{4}(a-2b)$ if and only if
\begin{align*}
	0&=H(f,g)-H(0,1) = \frac{1}{2}\left(f^2(1-g^2)+\frac{a}{2}(g^4-1)-b(g^2-1)\right)\\
	& =  \frac{(1-g^2)}{2}\left(f^2-\frac{a}{2}(g^2+1)+b\right)
\end{align*}
i.e. if and only if $g^2=1$ or $f^2-\frac{a}{2}(g^2+1)+b=0.$ $\mathring{\mathcal{A}}$ is the connected component of $\{(f,g):~H(f,g)\ne H(0,1)\}$ which contains $(0,0)$. Thanks to the symmetries of $H$, we can restrict our study of $\mathcal{A}$ to the set $\{(f,g):~ f\geq 0,\,g\geq 0\}.$
{
\begin{lem}\label{lem_EtudeReg}
Let $f_0\in \left(\sqrt{\frac{a-2b}{2}},\sqrt{a-b}\right)$ and $E_c=H\left(f_0,\sqrt{\frac{f_0^2+b}{a}}\right)$. Then there exist two monotone continuous functions
\begin{align*}
G_{1}: [E_c,H(0,1)]&\to \left[\sqrt{\frac{f_0^2+b}{a}},1\right]\\
G_{2}: [E_c,H(0,1)]&\to \left[\sqrt{\frac{2}{a}(f_0^2+b)-1},\sqrt{\frac{f_0^2+b}{a}}\right]
\end{align*}
such that, for $i=1,2$ and for all $E\in [E_c,H(0,1)]$,
\begin{align*}
&H(f_0,G_i(E))=E,\\
&\{(f_0, G_1(E)),(f_0, G_2(E))\} = \{(f_0,g)\in \mathcal{A}:g\geq 0\}\cap H^{-1}(\{E\}),
\end{align*}
and $G_1(E)\ge G_2(E)$.
\end{lem}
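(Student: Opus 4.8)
The plan is to reduce the whole statement to the elementary analysis of the single-variable function $h(g):=H(f_0,g)$ for $g\ge 0$. Substituting $u=g^2$ yields
\[
	h(g)=\frac{a}{4}u^2-\frac{f_0^2+b}{2}\,u+\frac{f_0^2}{2},\qquad u=g^2,
\]
an upward parabola in $u$ with vertex at $u^\ast=\frac{f_0^2+b}{a}$, i.e.\ at $g^\ast=\sqrt{\frac{f_0^2+b}{a}}$, where $h$ attains its minimum value $h(g^\ast)=E_c$. Since $g\mapsto g^2$ is strictly increasing on $[0,+\infty)$, the function $h$ is strictly decreasing on $[0,g^\ast]$ and strictly increasing on $[g^\ast,+\infty)$.

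Next I would identify the admissible range of $g$. For fixed $f_0$ the condition $(f_0,g)\in\mathcal A$ with $g\ge 0$ is equivalent to $g_-\le g\le 1$, where $g_-:=\sqrt{\frac{2}{a}(f_0^2+b)-1}$ is obtained by solving $2f_0^2-ag^2-(a-2b)\le 0$. The hypotheses on $f_0$ are precisely what place the vertex strictly inside this interval: $f_0>\sqrt{\frac{a-2b}{2}}$ makes $g_-$ real and positive, while $f_0<\sqrt{a-b}$ gives $\frac{f_0^2+b}{a}<1$, whence $0<g_-<g^\ast<1$. I would then compute the two endpoint values and find $h(g_-)=h(1)=H(0,1)$: for $g=1$ this is immediate, and for $g=g_-$ it follows from the identity established in the appendix, since the curved part of $\partial\mathcal A$, on which $(f_0,g_-)$ lies, coincides with the level set $\{H=H(0,1)\}$.

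With this in hand the two functions are defined as the inverses of the two monotone branches, $G_2:=\bigl(h|_{[g_-,g^\ast]}\bigr)^{-1}$ and $G_1:=\bigl(h|_{[g^\ast,1]}\bigr)^{-1}$. Because $h$ is continuous and strictly monotone on each branch, mapping $[g_-,g^\ast]$ decreasingly and $[g^\ast,1]$ increasingly onto $[E_c,H(0,1)]$, the inverses are continuous and monotone on $[E_c,H(0,1)]$ with exactly the stated ranges, and they satisfy $H(f_0,G_i(E))=E$ by construction. Since $G_1(E_c)=G_2(E_c)=g^\ast$ and both branches move away from $g^\ast$ as $E$ increases, one obtains $G_1(E)\ge G_2(E)$, with equality only at $E=E_c$. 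Finally, the set equality $\{(f_0,G_1(E)),(f_0,G_2(E))\}=\{(f_0,g)\in\mathcal A:g\ge0\}\cap H^{-1}(\{E\})$ is merely the statement that, for each $E\in[E_c,H(0,1)]$, the equation $h(g)=E$ has exactly these one or two solutions in $[g_-,1]$, which is immediate from the strict monotonicity on the two branches.

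Setting aside the routine computations, the only delicate point is the second step: verifying the chain $0<g_-<g^\ast<1$, i.e.\ that the minimizer $g^\ast$ lies strictly inside the admissible interval $[g_-,1]$. This is where both hypotheses $\sqrt{\frac{a-2b}{2}}<f_0<\sqrt{a-b}$ are genuinely used, and it is what guarantees $E_c<H(0,1)$, so that $[E_c,H(0,1)]$ is a nondegenerate interval on which both branches are defined.
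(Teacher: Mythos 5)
Your proof is correct and follows essentially the same route as the paper's: both analyze the one-variable function $g\mapsto H(f_0,g)$ on the admissible interval $\left[\sqrt{\tfrac{2}{a}(f_0^2+b)-1},\,1\right]$, identify the minimizer $\sqrt{\tfrac{f_0^2+b}{a}}$ splitting it into a decreasing and an increasing branch with common endpoint value $H(0,1)$, and define $G_1,G_2$ as the inverses of those branches. The only cosmetic difference is that you see the monotonicity via the substitution $u=g^2$ and the resulting parabola, while the paper computes $G'(g)=g(-b+ag^2-f_0^2)$ directly.
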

%
%
%
%
%

\begin{proof}
First of all, we observe that  
\begin{align*}
&\mathcal A \cap (\mathbb R_+)^2=\{(f,g)\in [0,\sqrt{a-b}]\times[0,1]: ag^2\ge 2f^2-(a-2b)\}\\
&{=\left(\left[0,\sqrt{\frac{a-2b}{2}}\right]\times[0,1]\right)}\\
&\cup \left\{(f,g):f\in\left(\sqrt{\frac{a-2b}{2}},\sqrt{a-b}\right],g\in  \left[\sqrt{\frac{2f^2-(a-2b)}{a}}, 1\right]\right\}
\end{align*}
Next, let $f_0\in\left(\sqrt{\frac{a-2b}{2}},\sqrt{a-b}\right)$ fixed, and define the function 
\begin{align*}
G:\left[\sqrt{\frac{2f_0^2-(a-2b)}{a}},1\right]&\to \mathbb R\\
g&\mapsto H(f_0,g).
\end{align*}
Since $G'(g) = g(-b+ag^2-f_0^2)$, we deduce that  $G$ is continuous, increasing in $\left[\sqrt{\frac{f_0^2+b}{a}},1\right]$ and decreasing in $\left[\sqrt{\frac{2f_0^2-(a-2b)}{a}},\sqrt{\frac{f_0^2+b}{a}}\right]$. Note that
\[
	\sqrt{\frac{2f_0^2-(a-2b)}{a}}<\sqrt{\frac{f_0^2+b}{a}}
\]
since $f_0<\sqrt{a-b}$. Moreover, we have
\[
	G\left(\sqrt{\frac{2f_0^2-(a-2b)}{a}}\right)=G(1)=H(0,1)
\]
and we define $E_c:=G\left(\sqrt{\frac{f_0^2+b}{a}}\right)=H\left(f_0,\sqrt{\frac{f_0^2+b}{a}}\right)$.
As a consequence, $G$ is a one-to-one function from $\left[\sqrt{\frac{f_0^2+b}{a}},1\right]$ onto $[E_c,H(0,1)]$ whose inverse is continuous and is denoted by $G_1$. Therefore, 
\begin{align*}
G_{1}: [E_c,H(0,1)]&\to \left[\sqrt{\frac{f_0^2+b}{a}},1\right]\\
E&\mapsto G_1(E)
\end{align*}
is a continuous function such that $H(f_0,G_1(E))=E$.
Similarly, we denote by $G_2$ the inverse of the restriction of the function $G$ to the set ${\left[\sqrt{\frac{2}{a}(f_0^2+b)-1},\sqrt{\frac{f_0^2+b}{a}}\right]}$. Hence, 
\begin{align*}
G_{2}: [E_c,H(0,1)]&\to \left[\sqrt{\frac{2}{a}(f_0^2+b)-1},\sqrt{\frac{f_0^2+b}{a}}\right]\\
E&\mapsto G_2(E)
\end{align*}
is a continuous function such that $H(f_0,G_2(E))=E$. Moreover, $G_1(E)\ge G_2(E)$ for all $E\in [E_c,H(0,1)]$.
\end{proof}

\begin{lem}\label{lemposencorner} Let $f_0\in \left(\sqrt{\frac{a}{2}},\sqrt{a-b}\right)$. If
$$
(\overline f,\overline g)\in \mathcal {A} \cap\{(f,g)\in \mathbb R^2:\vb f\vb \ge f_0\}, 
$$
then $H(\overline f,\overline g)>0$.
\end{lem}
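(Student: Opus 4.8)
The plan is to reduce the claim to a one–variable quadratic inequality. First I would introduce $s=\bar g^2$ and $t=\bar f^2$, so that the energy reads
\[
H(\bar f,\bar g)=\tfrac12\,t(1-s)+\tfrac{a}{4}s^2-\tfrac{b}{2}s .
\]
The membership $(\bar f,\bar g)\in\mathcal A$ gives $s\le 1$, so the coefficient $1-s$ is nonnegative, while the hypothesis $\vb\bar f\vb\ge f_0$ gives $t\ge f_0^2$. Multiplying these two facts yields $t(1-s)\ge f_0^2(1-s)$, hence
\[
H(\bar f,\bar g)\ \ge\ \psi(s):=\tfrac12 f_0^2(1-s)+\tfrac{a}{4}s^2-\tfrac{b}{2}s ,
\]
and it suffices to prove $\psi(s)>0$ for every real $s$.

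The function $\psi$ is a parabola in $s$ with positive leading coefficient $a/4$, so its global minimum is attained at $s^*=(f_0^2+b)/a$ (which, incidentally, lies in $(0,1)$ since $f_0^2<a-b$ forces $f_0^2+b<a$) and equals
\[
\psi(s^*)=\tfrac12 f_0^2-\frac{(f_0^2+b)^2}{4a}.
\]
Thus $\psi(s^*)>0$ is equivalent to $2a f_0^2>(f_0^2+b)^2$, that is, to $q(f_0^2)<0$ where $q(u)=u^2-2(a-b)u+b^2$. So the whole statement comes down to showing that $q$ is negative at $u=f_0^2$.

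The last step, which is exactly where the hypotheses $a>2b>0$ and $f_0\in\left(\sqrt{a/2},\sqrt{a-b}\right)$ enter, is to verify $q(f_0^2)<0$. Since $q$ is convex, it is enough to check that $q$ is negative at the two endpoints of the interval $[a/2,\,a-b]$ that contains $f_0^2$; a direct computation gives $q(a/2)=-\tfrac14(3a+2b)(a-2b)$ and $q(a-b)=-a(a-2b)$, both strictly negative because $a>2b>0$. Convexity then forces $q<0$ on the whole closed interval $[a/2,a-b]$, and in particular $q(f_0^2)<0$. Combining the three steps gives $H(\bar f,\bar g)\ge\psi(s^*)>0$. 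I expect the genuine difficulty to be isolated in this final reduction: the positivity of $H$ is false without the threshold $f_0>\sqrt{a/2}$, and the role of the endpoint evaluation $q(a/2)=-\tfrac14(3a+2b)(a-2b)<0$ is precisely to extract that threshold from the standing assumption $a>2b$.
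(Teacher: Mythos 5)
Your proof is correct, but it follows a genuinely different route from the paper's. The paper uses both defining inequalities of $\mathcal A$: from $a\overline g^2\ge 2\overline f^2-(a-2b)$ together with $\overline f^2\ge f_0^2>\frac{a}{2}$ it deduces $\overline g^2>\frac{2b}{a}$, and then observes that on $\{g^2\le1\}$ one has $H(f,g)\ge\frac{a}{4}g^4-\frac{b}{2}g^2=\frac{g^2}{4}(ag^2-2b)$, so that $H\le0$ would force $g^2\le\frac{2b}{a}$ --- a contradiction. You instead discard the hyperbola constraint of $\mathcal A$ altogether, keep only $\overline g^2\le1$ and $\overline f^2\ge f_0^2$, and minimize the resulting quadratic lower bound $\psi(s)$ in $s=\overline g^2$; the positivity of the minimum reduces to $q(f_0^2)<0$, which you verify by convexity from the endpoint values on $[\frac{a}{2},a-b]$. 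All your computations ($\psi(s^*)$, $q(a/2)=-\frac14(3a+2b)(a-2b)$, $q(a-b)=-a(a-2b)$) check out, and the standing hypothesis $a>2b>0$ is used exactly where you say it is. The paper's argument is shorter and stays closer to the geometry of $\mathcal A$ (it is really a statement about the corner regions of $\mathcal A$ lying above the zero level set of $H$); yours is more computational but proves a slightly stronger statement, namely positivity of $H$ on all of $\{|f|\ge f_0,\ g^2\le1\}$ without the hyperbola constraint, and it yields the explicit quantitative bound $H\ge\frac12 f_0^2-\frac{(f_0^2+b)^2}{4a}>0$.
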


\begin{proof}
First of all, since $ f_0 >\sqrt\frac{a}{2}>\sqrt\frac{a-2b}{2}$, we have for
\begin{align*}
\lefteqn{(\overline f,\overline g)\in\mathcal A \cap \{(f,g)\in \mathbb R^2:\vb f\vb \ge f_0\}}\\
&&=\{(f,g)\in [f_0,\sqrt{a-b}]\times[-1,1]: ag^2\ge 2f^2-(a-2b)\},
\end{align*}
that
$$
\frac{2b}{a}<\frac{2\overline f^2-(a-2b)}{a}\le \overline g^2\le 1.
$$
On the other hand, if $g^2\le 1$ and $H(f,g)\le 0$, then $g^2\le \frac{2b}{a}$.
As a consequence, 
we get $H(\overline f,\overline g)>0$.
\end{proof}

}

\subsection*{Acknowledgment}This work was partially supported by the Grant ANR-10-BLAN 0101 of the French Ministry of research. The authors would like to thank Maria J. Esteban for the helpful discussions.

\end{document}